\providecommand{\R}{\mathbb{R}}
\renewcommand{\leq}{\leqslant}
\renewcommand{\geq}{\geqslant}
\renewcommand{\div}{\operatorname{div}}
\newcommand{\curl}{\operatorname{curl}}
\newcommand{\dist}{\operatorname{dist}}
\newcommand{\Id}{\operatorname{Id}}
\newtheorem{Theorem}{Theorem}
\newtheorem{Definition}{Definition}
\newtheorem{Proposition}{Proposition}
\newtheorem{Lemma}{Lemma}
\begin{document}

\date{23rd July 2018}
\title{On the 2D ``viscous incompressible fluid + rigid body'' system with Navier conditions and unbounded energy \\[1ex]
\itshape Sur le mouvement d'un corps rigide dans un \'ecoulement bidimensionel d'un fluide visqueux incompressible avec conditions au bord de Navier et \'enegie infinie}
\author{Marco Bravin\footnote{Institut de Math\'ematiques de Bordeaux, UMR CNRS 5251,
Universit\'e de Bordeaux, 351 cours
de la Lib\'eration, F33405 Talence Cedex, France. }}

\maketitle

\begin{abstract}
In this paper we consider the motion of a rigid body  in a viscous incompressible fluid when some Navier slip conditions are prescribed on the body's boundary.
The whole  ``viscous incompressible fluid + rigid body'' system is assumed to occupy the full plane $\R^{2}$. We prove the existence of global-in-time weak solutions with constant non-zero circulation at infinity.
\end{abstract}

\begin{otherlanguage}{french}
\begin{abstract}
Dans cet article, nous consid\'erons le mouvement d'un corps rigide dans un fluide visqueux incompressible avec des conditions de glissement avec friction de Navier \'a l'interface. Le syst\'eme ``fluide+corps rigide'' est suppos\'e occuper le plan tout entier. Nous prouvons l'existence de solutions globales en temps avec une circulation constante non nulle \'a l'infini. 
\end{abstract}
\end{otherlanguage}

\section*{Introduction}

The problem of well-posedness of Navier-Stokes equations with infinite energy in dimension two has been studied a lot in the past years. We recall the work \cite{ex:mes}, where the authors prove existence for initial data which have measure vorticity and the corresponding uniqueness result is available in \cite{GA:GA}. Other interesting works are \cite{LR} and \cite{MMP}, where the authors prove existence of weak solutions in loc-uniform Lebesque spaces. The first result deals with solution defined in the all space $\mathbb{R}^3$, the second one defined in the half space $ \mathbb{R}^3_{+}$. For exterior domain, where no slip boundary condition are prescribed on the boundary, there exists an existence result for initial data in the weak-$L^2 $ space with some restriction of the concentration of the initial energy. These solutions will remain uniformly bounded in weak-$L^2 $ norm for almost every time and bounded in the $ K_4 $ norm which is the Kato norm for $ p = 4 $.    

In this paper we study weak solutions for viscous incompressible fluid + rigid body system where Navier-type boundary condition are prescribed on the boundary of the solid and the energy is allowed to be infinity for lack of integrability at infinity, more precisely the solutions behaves like a $ x^{\perp}/2\pi |x|^2$ at infinity.

In the case of finite energy, a wide literature is presence for example \cite{PS}, \cite{exi:GeH}, \cite{IO}, in particular in \cite{PS} existence of weak solution is proved. The goals of this work are to extend the definition of weak solutions presented in \cite{PS} in our setting and prove existence. The main contributions are the extension of the definition of weak solutions and the density argument presented in Lemma \ref{lem2}, this last result is also essential to make the proof of Theorem 1 of \cite{PS} correct.

\section{The 2D ``viscous incompressible fluid + rigid body''  system  with Navier conditions}

We study the Cauchy problem for a system describing the motion of a
rigid body immersed in an viscous incompressible fluid when some Navier slip conditions are prescribed on the body's boundary.
In \cite{PS}, the existence of global weak solutions with finite energy to the
Cauchy problem were established, in the case where the whole system occupies the full space
$\R^3$.  Moreover, several properties of these solutions were exhibited.
We consider here the $2$D case, for which our analysis can be carried out for initial data corresponding to unbounded fluid kinetic energy.

Let us therefore consider $\mathcal{S}_0$ a closed, bounded, connected and simply connected subset of the plane with smooth boundary.
We assume that the body initially occupies the domain $\mathcal{S}_0$ and we denote $\mathcal{F}_0 = \R^2  \setminus \mathcal{S}_0 $ the domain occupied by the fluid. 

The equations modelling the dynamics of the system in the body frame read then
\begin{eqnarray}
\label{NS11-2d}
\displaystyle \frac{\partial u}{\partial t}
+ \left[(u-\ell-r x^\perp)\cdot\nabla\right]u
+ r u^\perp +\nabla p = \nu \Delta  u&& x\in \mathcal{F}_{0} ,\\
\label{NS12-2d}
\div u = 0 && x\in \mathcal{F}_{0} , \\
\label{NS13-2d}
u\cdot n = \left(\ell +r x^\perp\right)\cdot n && x\in \partial \mathcal{S}_0, \\
\label{NS14-2d}
(D(u) n )  \cdot \tau = - \alpha (u -\ell-r x^\perp ) \cdot \tau   && \text{for} \ x\in \partial \mathcal{S}_0,  \\
\label{Solide11-2d}
m \ell'(t)= - \int_{\partial \mathcal{S}_0}  \sigma  n \, ds-mr \ell^\perp, & & \\
\label{Solide12-2d}
\mathcal{J} r'(t)= -\int_{\partial \mathcal{S}_0} x^\perp \cdot  \sigma  n \, ds, & &  \\
\label{NS1ci}
u(0,x)= u_0 (x) && x\in \mathcal{F}_{0} ,\\
\label{Solide1ci-2d}
\ell(0)= \ell_0,\ r (0)= r_0,
\end{eqnarray}
where $ u_0\cdot n = (l_0+r_0 x^{\perp})\cdot n $ for any $ x \in \partial \mathcal{S}_0 $. Here $ u =(u_1,u_2)$ and $p$ denote the velocity and pressure fields, $\nu > 0 $ is the viscosity, $n $ and $\tau$ are the unit outwards normal and counterclockwise tangent vectors to the boundary of the fluid domain, $\alpha \geq 0 $ is a material constant (the friction
coefficient).
$m$ and $ \mathcal{J}$ denote respectively the mass and the moment of inertia of the body  while the fluid  is supposed to be  homogeneous of density $1$, to simplify the notations.
The Cauchy stress tensor is defined by $ \sigma = -p \Id_2 + 2 \nu D(u)$, where $D(u) =  ( \frac{1}{2} ( \partial_{j} u_{i} +  \partial_{i} u_{j} ) )_{1 \leqslant i,j \leqslant  2} $ is the deformation tensor.

When $x=(x_1,x_2)$ the notation $x^\perp $ stands for $x^\perp =( -x_2 , x_1 )$,
$l(t)$
is the velocity of the center of mass of the body and $r(t)$ denotes the angular velocity of the rigid body. Finally to shorter the notation we will write $ u_S = l +r x^{\perp}$.

\section{Leray-type solutions with infinite energy}

We are interested in solution with an initial data 
\begin{equation*}
u_0 = \tilde{u}_0+\beta H_{\mathcal{S}_0} \in L^2_{\sigma}(\mathcal{F}_0)\oplus \mathbb{R} H_{\mathcal{S}_0},
\end{equation*}
where $ H_{\mathcal{S}_0} $ is the unique solution vanishing at infinity of 
\begin{gather*}
\div H_{\mathcal{S}_0} = 0 \quad   \text{for}  \ x\in  \mathcal{F}_{0}, \\
\curl H_{\mathcal{S}_0} = 0 \quad   \text{for}  \ x\in  \mathcal{F}_{0}, \\
H_{\mathcal{S}_0} \cdot n = 0 \quad   \text{for}  \ x\in   \partial \mathcal{S}_0, \\
\int_{\partial \mathcal{S}_0 } H_{\mathcal{S}_0} \cdot \tau \, ds = 1 .
\end{gather*}
See, for instance, \cite{kikuchi}. This solution is smooth and decays like $1/ | x| $ at infinity.
For any $ x_0 $ in the interior of $ \mathcal{S}_0 $, we also have 
%
%
\begin{equation} \label{EstiHache}
H_{\mathcal{S}_0}, \nabla H_{\mathcal{S}_0}  \in L^\infty (\mathcal{F}_0) \ \text{ and } \
 H_{\mathcal{S}_0} -  \frac{(x-x_0)^{\perp}}{2 \pi | x-x_0 |^{2}}, \ \nabla H_{\mathcal{S}_0} , \ H_{\mathcal{S}_0}^\perp - (x-x_0)^\perp  \cdot\nabla H_{\mathcal{S}_0} \in L^2 (\mathcal{F}_0),
\end{equation}
but $ H_{\mathcal{S}_0} $ is not a $L^2 $ function.
In the case of regular solutions to the Euler equations this vector field  is useful to take the velocity circulation around the body into account, which is a conserved quantity according to Kelvin's theorem.

First of all we note that $ H_{\mathcal{S}_0} $ satisfies the equations \eqref{NS11-2d}-\eqref{NS13-2d} unless the boundary condition \eqref{NS14-2d}. This leads us to expect that a solution $ u $ of \eqref{NS11-2d}-\eqref{Solide1ci-2d} with initial data $ \tilde{u}_0+\beta H_{\mathcal{S}_0} $ is of the form 
\begin{equation*}
u = \tilde{u}+\beta H_{\mathcal{S}_0}, \text{ with } \tilde{u} \in L^2_{\sigma}(\mathcal{F}_0)
\end{equation*} 
and $ \beta $ is independent of time.

We now introduce a definition of Leray-type solutions for these initial data. First of all in the literature, for example in \cite{PS}, there is already a definition of weak solutions of Leray-type with finite energy, i.e. with $ \beta = 0 $, so we want to be coherent with this definition. In the next subsection we recall the definition of weak solution with finite energy coming from \cite{PS} and then we notice that we can extend this definition in a straight-forward way to our setting.    

\subsection{A weak formulation with finite energy}

Let us now use the notation $ \mathcal{H} $ for  the following space
\begin{equation*}
 \mathcal{H} = \{ \phi \in L^{2} (\R^{2}) \big| \ \div \phi = 0  \text{ in }  \R^{2} \text{ and } D(\phi) = 0  \text{ in }  \mathcal{S}_0 \} .
\end{equation*}
For all  $\phi \in  \mathcal{H}$, there exist $\ell_{\phi} \in \R^{2}$ and $r_{\phi} \in \R $ such that for any $x \in  \mathcal{S}_0$,
$\phi (x) = \ell_{\phi} + r_{\phi} x^\perp $.
Therefore we extend the initial data $v_{0}$  by setting
$v_{0} = \ell_{0} + r_{0}  x^\perp $  for $x \in \mathcal{S}_0 $.
Conversely, when $\phi \in  \mathcal{H}$, we denote  $\phi_{\mathcal{S}}$ its restriction to $\mathcal{S}_0 $.
Now we endow the space $\mathcal{H}$ with the following inner product
\begin{equation*}
(\phi  , \psi  )_{\mathcal{H}} =    \int_{\mathcal{F}_0 }  \phi \cdot  \psi  \, dx  +  m  \ell_{\phi} \cdot  \ell_{\psi} + \mathcal{J}  r_{\phi}   r_{\psi}  .
\end{equation*}
which is equivalent to the restriction of the $ L^2(\mathbb{R}^2) $ inner product to the subspace $\mathcal{H}$. Let us also denote
\begin{gather*}
\underline{\mathcal{V}}=   \left\{  \phi \in   \mathcal{H} \bigg| \ \int_{ \mathcal{F}_0 } | \nabla \phi  (y) |^2  dy < + \infty    \right\} \ \text{  with  norm } \ \ \|  \phi  \|_{\underline{\mathcal{V}}} = \|  \phi  \|_{\mathcal{H}} + \|  \nabla  \phi  \|_{L^{2} (\mathcal{F}_0 ,dy )   } , 
\\ \mathcal{V} =   \left\{  \phi \in   \mathcal{H} \bigg| \ \int_{ \mathcal{F}_0 } | \nabla \phi  (y) |^2 (1 + | y |^2 ) dy < + \infty    \right\}  \ \text{  with  norm } \ \ \|  \phi  \|_{\mathcal{V}}= \|  \phi  \|_{\mathcal{H}} + \|  \nabla  \phi  \|_{L^{2} (\mathcal{F}_0  ,(1 + | y |^2 )^{\frac{1}{2}} dy ) },  \\
\widehat{\mathcal{V}} =   \bigg\{  \phi \in   \mathcal{V} \bigg| \ \phi\vert_{\mathcal{F}_0} \in \text{Lip} (\overline{\mathcal{F}_0} )    \bigg\}  \text{ with norm  } \|  \phi  \|_{\widehat{\mathcal{V}}} = \|  \phi  \|_{\mathcal{V}} + \|   \phi  \|_{ \text{Lip} ( \overline{\mathcal{F}_0})   }.
\end{gather*}
Let us emphasize that $\widehat{\mathcal{V}} \subset \mathcal{V} \subset \underline{\mathcal{V}}$. We define formally for appropriate $u$ and $v$, 
\begin{eqnarray*}
a(u,v) &=& - \alpha  \int_{\partial \mathcal{S}_0} (u - u_\mathcal{S} ) \cdot (v - v_\mathcal{S} ) - \int_{\mathcal{F}_0 }     D(u) :  D(v)
\\ b(u,v,w) &=&
\int_{\mathcal{F}_0 } \Big(     [   ( u -  u_\mathcal{S} )  \cdot\nabla w ] \cdot v   -  r_u  v^\perp \cdot w ) \Big) - m r_u \ell_{v}^\perp \cdot \ell_w  .
\end{eqnarray*}
The next straight-forward proposition clarify in which spaces $ a $ and $ b $ are defined.
\begin{Proposition}\label{pro:fab} The following holds true:
\begin{enumerate}
\item[i.] $b$ is a  trilinear  continuous map from $\underline{\mathcal{V}} \times \underline{\mathcal{V}} \times \mathcal{V}$ to $\R$, i.e. there exists a constant $C>0$ such that
for any $(u,v,w) \in \underline{\mathcal{V}} \times  \underline{\mathcal{V}} \times \mathcal{V}$,
\begin{equation*}
|b(u,v,w)  |  \leqslant C \| u  \|_{\underline{\mathcal{V}}} \,  \| v  \|_{\underline{\mathcal{V}}} \,  \| w \|_{\mathcal{V}} .
\end{equation*} 
Moreover if $ v \in \underline{\mathcal{V}} $ it holds $ b(u,v,v) = 0 $ and if $ v, w \in \mathcal{V} $, it holds $ b(u,v,w) = - b(u,w,v) $.
\item[ii.] $b$ can be extended to a continuous map from $\mathcal{H} \times \mathcal{H} \times \widehat{\mathcal{V}}$ to $ \mathbb{R}$, i.e. there exists a constant $C>0$ such that
for any $(u,v,w) \in \mathcal{H} \times \mathcal{H} \times \widehat{\mathcal{V}}$,
\begin{equation*}
|b(u,v,w)  |  \leqslant C \| u  \|_{\mathcal{H}} \,  \| v  \|_{\mathcal{H}} \,  \| w \|_{\widehat{\mathcal{V}}} . 
\end{equation*} 
\item[iii.] $ a(.,.) $ is a continuous map from $ \underline{\mathcal{V}}\times \underline{\mathcal{V}} $ to $ \R $, i.e. for any $u,v$ in $\underline{\mathcal{V}} $,
\begin{equation*}
| a(u,v)  |  \leq  C   \|  u \|_{\underline{\mathcal{V}}} \,    \|  v \|_{\underline{\mathcal{V}}}   .
\end{equation*}
\end{enumerate}
\end{Proposition}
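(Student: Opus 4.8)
The plan is to estimate each term of $a$ and $b$ separately, the recurring device being to peel the ``rigid part'' $u_\mathcal{S}=\ell_u+r_u\,x^\perp$ off the genuinely $L^2$ field and to use that $|\ell_\phi|+|r_\phi|\le C\|\phi\|_{\mathcal{H}}$ by the very definition of the $\mathcal{H}$-inner product. Trilinearity of $b$ and bilinearity of $a$ are immediate from the formulas, so only the continuity bounds and the algebraic identities require work.

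I would start with iii, which is the softest. For the interior term Cauchy--Schwarz gives $|\int_{\mathcal{F}_0}D(u):D(v)|\le\|\nabla u\|_{L^2(\mathcal{F}_0)}\|\nabla v\|_{L^2(\mathcal{F}_0)}\le\|u\|_{\underline{\mathcal{V}}}\|v\|_{\underline{\mathcal{V}}}$. For the boundary term I would use a trace inequality on a bounded collar $\mathcal{F}_0\cap B_R$ of $\partial\mathcal{S}_0$: since $u|_{\mathcal{F}_0}\in H^1(\mathcal{F}_0)$ with $\|u\|_{H^1(\mathcal{F}_0)}\le C(\|u\|_{\mathcal{H}}+\|\nabla u\|_{L^2(\mathcal{F}_0)})\le C\|u\|_{\underline{\mathcal{V}}}$, and $\|u_\mathcal{S}\|_{L^2(\partial\mathcal{S}_0)}\le C(|\ell_u|+|r_u|)\le C\|u\|_{\mathcal{H}}$, the trace theorem yields $\|u-u_\mathcal{S}\|_{L^2(\partial\mathcal{S}_0)}\le C\|u\|_{\underline{\mathcal{V}}}$, and Cauchy--Schwarz on $\partial\mathcal{S}_0$ bounds $\alpha\int_{\partial\mathcal{S}_0}(u-u_\mathcal{S})\cdot(v-v_\mathcal{S})$.

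For i, I would split $[(u-u_\mathcal{S})\cdot\nabla w]\cdot v=(u\cdot\nabla w)\cdot v-(u_\mathcal{S}\cdot\nabla w)\cdot v$. The first piece is handled by the two--dimensional Gagliardo--Nirenberg (Ladyzhenskaya) inequality $\|f\|_{L^4(\mathcal{F}_0)}\le C\|f\|_{L^2(\mathcal{F}_0)}^{1/2}\|f\|_{H^1(\mathcal{F}_0)}^{1/2}$, applied to $u$ and $v$ on the exterior (smooth, uniformly Lipschitz) domain $\mathcal{F}_0$: $|\int u\cdot\nabla w\cdot v|\le\|u\|_{L^4}\|v\|_{L^4}\|\nabla w\|_{L^2}\le C\|u\|_{\underline{\mathcal{V}}}\|v\|_{\underline{\mathcal{V}}}\|w\|_{\underline{\mathcal{V}}}$. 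For the second piece, $|u_\mathcal{S}|\le C(|\ell_u|+|r_u|)(1+|x|)\le C\|u\|_{\mathcal{H}}(1+|x|)$, so $|\int u_\mathcal{S}\cdot\nabla w\cdot v|\le C\|u\|_{\mathcal{H}}\,\|(1+|x|)\nabla w\|_{L^2(\mathcal{F}_0)}\,\|v\|_{L^2}\le C\|u\|_{\underline{\mathcal{V}}}\|v\|_{\underline{\mathcal{V}}}\|w\|_{\mathcal{V}}$ --- this is exactly the point where the third slot must lie in $\mathcal{V}$ rather than $\underline{\mathcal{V}}$: the linear growth of $u_\mathcal{S}$ is absorbed by the $(1+|y|^2)$-weight in the definition of $\mathcal{V}$. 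The two remaining terms $r_u\,v^\perp\cdot w$ and $m\,r_u\,\ell_v^\perp\cdot\ell_w$ are bounded directly by $C\|u\|_{\mathcal{H}}\|v\|_{\mathcal{H}}\|w\|_{\mathcal{H}}$. As for the identities: since $v^\perp\cdot v=0$ and $\ell_v^\perp\cdot\ell_v=0$ pointwise, $b(u,v,v)$ collapses to $\tfrac12\int_{\mathcal{F}_0}(u-u_\mathcal{S})\cdot\nabla(|v|^2)$; integrating by parts, the bulk term vanishes because $\div(u-u_\mathcal{S})=\div u-r_u\div(x^\perp)=0$, the term on $\partial\mathcal{S}_0$ vanishes because $(u-u_\mathcal{S})\cdot n=0$ there ($u\cdot n$ is continuous across $\partial\mathcal{S}_0$ and $u=u_\mathcal{S}$ inside $\mathcal{S}_0$), and the term at infinity is killed by a cutoff $\chi_R$ using the integrability just obtained; thus $b(u,v,v)=0$, and polarizing $0=b(u,v+w,v+w)$ for $v,w\in\mathcal{V}$ gives $b(u,v,w)=-b(u,w,v)$.

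Finally, for ii, the same splitting applies with $u,v\in\mathcal{H}$ and $w\in\widehat{\mathcal{V}}$: now one uses $\nabla w\in L^\infty(\overline{\mathcal{F}_0})$ for the low-regularity product, $|\int u\cdot\nabla w\cdot v|\le\|\nabla w\|_{L^\infty}\|u\|_{L^2}\|v\|_{L^2}\le C\|w\|_{\widehat{\mathcal{V}}}\|u\|_{\mathcal{H}}\|v\|_{\mathcal{H}}$, while the $u_\mathcal{S}$-part is estimated exactly as in i by $\|(1+|x|)\nabla w\|_{L^2}$ (finite since $\widehat{\mathcal{V}}\subset\mathcal{V}$), and the two order-zero terms are again $\le C\|u\|_{\mathcal{H}}\|v\|_{\mathcal{H}}\|w\|_{\mathcal{H}}$; since the resulting bound shows the defining integrals converge on all of $\mathcal{H}\times\mathcal{H}\times\widehat{\mathcal{V}}$, this is the claimed continuous extension. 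I expect the main obstacle to be precisely the convective integral $\int_{\mathcal{F}_0}[(u-u_\mathcal{S})\cdot\nabla w]\cdot v$ in i: one must see that the linear growth of $u_\mathcal{S}$ at infinity forces the weighted space $\mathcal{V}$ on the test field, be a little careful that Ladyzhenskaya is legitimate on the exterior domain $\mathcal{F}_0$, and justify the integration by parts at infinity underlying $b(u,v,v)=0$.
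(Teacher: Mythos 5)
Your proof is correct and follows essentially the intended argument: the paper states this proposition without proof (it is declared ``straight-forward'', the details being those of Lemmas 1 and 2 of \cite{PS}), and your estimates --- Ladyzhenskaya's inequality on the exterior domain for $\int u\cdot\nabla w\cdot v$, the weighted bound $\|(1+|x|)\nabla w\|_{L^2}\leq C\|w\|_{\mathcal{V}}$ absorbing the linear growth of $u_{\mathcal{S}}$ (which is precisely why the third slot needs $\mathcal{V}$, resp. $\nabla w\in L^\infty$ for part ii), the trace inequality on a collar for the boundary term of $a$, and the integration by parts using $\div(u-u_{\mathcal{S}})=0$ and $(u-u_{\mathcal{S}})\cdot n=0$ on $\partial\mathcal{S}_0$ for the cancellation --- are exactly the ones needed. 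Your truncation argument for the term at infinity is also the right way to read the identity $b(u,v,v)=0$ when $v$ is merely in $\underline{\mathcal{V}}$, since in that case the piece $\int_{\mathcal{F}_0} r_u\,x^\perp\cdot\nabla(|v|^2)$ is only conditionally convergent.
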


We are now able to state the definition of weak solution defined in \cite{PS}.

\begin{Definition}
Let $ v_0 \in \mathcal{H}$, we say that $ v \in C(0,T; \mathcal{H}) \cap L^2(0,T;\underline{\mathcal{V}}) $ is a solution of (\ref{NS11-2d})-(\ref{Solide1ci-2d}) with finite energy if and only if for all $ \varphi \in C^{\infty}([0,T];\mathcal{H}) $ such that $ \varphi|_{\mathcal{F}_0} \in C^{\infty}(0,T; C^{\infty}_c(\overline{\mathcal{F}_0})) $ and for a.e. $ t \in [0,T] $ it holds
\begin{equation*}
(v,\varphi)_{\mathcal{H}}  (t) -  (v_{0},\varphi |_{t=0})_{\mathcal{H}}   =
 \int_{0}^{t} \Big[ (v , \partial_{t} \varphi)_{\mathcal{H}}
 + 2  \nu a(v,\varphi)  -  b(v,\varphi , v)    \Big] .
\end{equation*}
\end{Definition}


\subsection{A weak formulation with infinite energy}

To extend the definition of weak solution in the case of unbounded energy we start with noticing that we can continuously extend the map $ a $ and $ b $ in our new setting.
First of all for $ X $ one of the spaces $\mathcal{H}$, $ \underline{\mathcal{V}}$, $ \mathcal{V}$ or $ \widehat{\mathcal{V}}$, the space
$
X\oplus \R H $ is endowed with the norm $ \| u \|_{X\oplus \R H} = \|\tilde{u}+\beta H\|_{X\oplus \R H} = \|\tilde{u}\|_{X}+|\beta|, 
$ moreover we use the convention that $ u_\mathcal{S} = \tilde{u}_\mathcal{S} $, $ l_u = l_{\tilde{u}}$ and $ r_u = r_{\tilde{u}}$, i.e. we extend  the function $ H $ by $ 0 $ inside the solid $\mathcal{S}_0 $.

\begin{Proposition} \label{ext:ab} The map $ a $ and $ b $ can be linearly extended as follow:
\begin{enumerate}
\item[i.] the map $b$  can be continuously extended to a trilinear map on $(\underline{\mathcal{V}} \oplus \R  H) \times  \underline{\mathcal{V}} \times (\mathcal{V} \oplus \R  H) $ by
\begin{equation*}
b(u,\tilde{v},w) =
\int_{\mathcal{F}_0 } \Big(     [   ( u -  u_\mathcal{S} )  \cdot\nabla w ] \cdot \tilde{v}   -  r_u  \tilde{v}^\perp \cdot w ) \Big) - m r_u \ell_{\tilde{v}}^\perp \cdot \ell_w.
\end{equation*}
The continuity assumption is equivalent to the following inequality :
there exists a constant $C>0$ such that
for any  $(u = \tilde{u} + \beta_1 H  , \tilde{v} , w=  \tilde{w}+ \beta_3 H)  \in (\underline{\mathcal{V}}\oplus \R  H) \times  \underline{\mathcal{V}} \times (\mathcal{V} \oplus \R  H)$,
\begin{equation*}
|b(u, \tilde{v},w)  |  \leqslant C ( \| \tilde{u} \|_{\underline{\mathcal{V}}} + |   \beta_1 |) \,  \|  \tilde{v}  \|_{\underline{\mathcal{V}}} \, ( \| \tilde{w} \|_{\mathcal{V}} +  |   \beta_3 |).
\end{equation*}
\item[ii.] The map $ b(H,.,.) $, $ b(.,.,H) $ are continuous bilinear map from $ \underline{\mathcal{V}}\times\underline{\mathcal{V}}$ to $ \mathbb{R}$ and by Blasius lemma $ b(H,\tilde{v},H) = 0 $ for any $ \tilde{v} \in \underline{\mathcal{V}}$ (we refer to \cite[Lemma A.1]{GLS} for the Blasius lemma).
\item[iii.] For $ u \in \underline{\mathcal{V}}\oplus \R H$ and $\tilde{v} \in  \underline{\mathcal{V}} $, we have $ b(u,\tilde{v},\tilde{v}) = 0 $. Moreover if $ \tilde{v}, \tilde{w} \in \mathcal{V} $, it holds $ b(u, \tilde{v}, \tilde{w}) = - b(u,\tilde{w},\tilde{v}) $.
\item[iv.] The trilinear map $ b $ can be extended in a unique way on  $(\mathcal{H} \oplus \R  H) \times \mathcal{H}\times  (\widehat{\mathcal{V}}  \oplus \R  H)$ in a continuous way, i.e. there exists a constant $C>0$ such that
for any $(u,\tilde{v},w) = (\tilde{u} + \beta_1 H,  \tilde{v} , \tilde{w}+ \beta_3 H)$,
\begin{equation*}
|b(u,\tilde{v},w)  |  \leqslant C (\| \tilde{u}  \|_{\mathcal{H}}+ |   \beta_1 |) \,  \| \tilde{v}  \|_{\mathcal{H}} \, ( \| \tilde{w} \|_{\widehat{\mathcal{V}}} + |   \beta_3 |).
\end{equation*}
\item[v.] $ a(.,.) $ can be extended to a continuous bilinear map from $ (\underline{\mathcal{V}} \oplus \R  H )\times \underline{\mathcal{V}} $ to $ \R $, where for any $ (u,\tilde{v}) $
\begin{equation*}
a(u,\tilde{v}) = - \alpha  \int_{\partial \mathcal{S}_0} (u - u_\mathcal{S} ) \cdot (\tilde{v} - \tilde{v}_\mathcal{S} ) - \int_{\mathcal{F}_0 }     D(u) :  D(\tilde{v}).
\end{equation*}
\end{enumerate}
\end{Proposition}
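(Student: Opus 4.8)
The plan is to exploit the (tri)linearity of $b$ and $a$ and reduce everything to estimates on the finitely many ``pure'' pieces produced by substituting $u=\tilde{u}+\beta_1 H$ and $w=\tilde{w}+\beta_3 H$:
\[
b(u,\tilde{v},w)=b(\tilde{u},\tilde{v},\tilde{w})+\beta_1\,b(H,\tilde{v},\tilde{w})+\beta_3\,b(\tilde{u},\tilde{v},H)+\beta_1\beta_3\,b(H,\tilde{v},H),
\]
and $a(u,\tilde{v})=a(\tilde{u},\tilde{v})+\beta_1\,a(H,\tilde{v})$. Since $b(\tilde{u},\tilde{v},\tilde{w})$ and $a(\tilde{u},\tilde{v})$ are already handled by Proposition~\ref{pro:fab} in each relevant topology, it suffices to treat the pieces containing $H$. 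Throughout one uses the convention that $H$ is extended by $0$ inside $\mathcal{S}_0$, so that $r_H=0$, $\ell_H=0$ and $H_\mathcal{S}=0$, together with the decay and integrability properties collected in \eqref{EstiHache}.

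With this convention $b(H,\tilde{v},\tilde{w})$ collapses to $\int_{\mathcal{F}_0}[H\cdot\nabla\tilde{w}]\cdot\tilde{v}$, and since $H\in L^\infty(\mathcal{F}_0)$ one gets $|b(H,\tilde{v},\tilde{w})|\le\|H\|_{L^\infty}\,\|\tilde{v}\|_{L^2(\mathcal{F}_0)}\,\|\nabla\tilde{w}\|_{L^2(\mathcal{F}_0)}$; as $\mathcal{V},\widehat{\mathcal{V}}\subset\underline{\mathcal{V}}$, this gives the continuity in (ii) and the bounds needed in (i) and (iv). The delicate piece is $b(\tilde{u},\tilde{v},H)$ (the case $u=\tilde{u}$, $w=H$ of the formula in (i)): the integrand $[(\tilde{u}-\tilde{u}_\mathcal{S})\cdot\nabla H]\cdot\tilde{v}-r_{\tilde{u}}\,\tilde{v}^\perp\cdot H$ is not term-by-term integrable, since neither $x^\perp\cdot\nabla H$ nor $H$ lies in $L^2(\mathcal{F}_0)$. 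Here I would use, pointwise almost everywhere, the vector identity $x^\perp\cdot\nabla H=H^\perp+g$ with $g:=x_0^\perp\cdot\nabla H-(H^\perp-(x-x_0)^\perp\cdot\nabla H)$, which lies in $L^2(\mathcal{F}_0)$ by \eqref{EstiHache}, together with $H^\perp\cdot\tilde{v}+\tilde{v}^\perp\cdot H=0$ (from $a^\perp\cdot b=-a\cdot b^\perp$); the two non-integrable contributions then cancel and the integrand reduces to $[\tilde{u}\cdot\nabla H]\cdot\tilde{v}-[\ell_{\tilde{u}}\cdot\nabla H]\cdot\tilde{v}-r_{\tilde{u}}\,g\cdot\tilde{v}\in L^1(\mathcal{F}_0)$. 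Estimating its three terms with $\nabla H\in L^\infty\cap L^2(\mathcal{F}_0)$, $g\in L^2(\mathcal{F}_0)$ and $|\ell_{\tilde{u}}|+|r_{\tilde{u}}|\le C\|\tilde{u}\|_\mathcal{H}$ yields $|b(\tilde{u},\tilde{v},H)|\le C\,\|\tilde{u}\|_\mathcal{H}\,\|\tilde{v}\|_\mathcal{H}$; only $\mathcal{H}$-norms occur, which is exactly what makes the extension to $\mathcal{H}\times\mathcal{H}$ in (iv) possible. Finally $b(H,\tilde{v},H)=\int_{\mathcal{F}_0}[H\cdot\nabla H]\cdot\tilde{v}$; since $\curl H=0$ one has $(H\cdot\nabla)H=\tfrac{1}{2}\nabla|H|^2$, so integrating by parts ($\div\tilde{v}=0$, decay at infinity) and using $\tilde{v}\cdot n=(\ell_{\tilde{v}}+r_{\tilde{v}}x^\perp)\cdot n$ on $\partial\mathcal{S}_0$ gives $b(H,\tilde{v},H)=\tfrac{1}{2}(\ell_{\tilde{v}}\cdot\int_{\partial\mathcal{S}_0}|H|^2 n\,ds+r_{\tilde{v}}\int_{\partial\mathcal{S}_0}|H|^2\,x^\perp\cdot n\,ds)$, and both boundary integrals vanish by the Blasius lemma \cite[Lemma~A.1]{GLS} (the potential-flow stress tensor $H\otimes H-\tfrac{1}{2}|H|^2\Id_2$ is symmetric, divergence-free and $O(|x|^{-2})$ at infinity, while $H\cdot n=0$ on $\partial\mathcal{S}_0$).

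The remaining assertions follow quickly. For (iii), $b(u,\tilde{v},\tilde{v})=b(\tilde{u},\tilde{v},\tilde{v})+\beta_1\,b(H,\tilde{v},\tilde{v})$, the first term vanishing by Proposition~\ref{pro:fab} and $b(H,\tilde{v},\tilde{v})=\int_{\mathcal{F}_0}[H\cdot\nabla\tilde{v}]\cdot\tilde{v}=\tfrac{1}{2}\int_{\mathcal{F}_0}H\cdot\nabla|\tilde{v}|^2=\tfrac{1}{2}\int_{\partial\mathcal{S}_0}|\tilde{v}|^2\,(H\cdot n)\,ds=0$ (using $\div H=0$, $H\cdot n=0$ on $\partial\mathcal{S}_0$, decay at infinity); the skew-symmetry for $\tilde{v},\tilde{w}\in\mathcal{V}$ then comes from polarising $0=b(u,\tilde{v}+\tilde{w},\tilde{v}+\tilde{w})$. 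For the continuity estimates in (i) and (iv) one adds the four pieces: each bound has the product form $\|\cdot\|\,\|\cdot\|\,\|\cdot\|$ with $\beta_1,\beta_3$ in the role of the $H$-slots, so they recombine into $C(\|\tilde{u}\|_X+|\beta_1|)\,\|\tilde{v}\|_X\,(\|\tilde{w}\|_Y+|\beta_3|)$ with $(X,Y)=(\underline{\mathcal{V}},\mathcal{V})$ for (i) and $(X,Y)=(\mathcal{H},\widehat{\mathcal{V}})$ for (iv); the uniqueness in (iv) follows from the density of $\underline{\mathcal{V}}$ in $\mathcal{H}$ (see Lemma~\ref{lem2}) and the continuity just established. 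For (v), $a(H,\tilde{v})=-\alpha\int_{\partial\mathcal{S}_0}H\cdot(\tilde{v}-\tilde{v}_\mathcal{S})\,ds-\int_{\mathcal{F}_0}D(H):D(\tilde{v})$ (using $H_\mathcal{S}=0$): the boundary term is bounded by $\|H\|_{L^\infty(\partial\mathcal{S}_0)}$ and the trace inequality $\|\tilde{v}-\tilde{v}_\mathcal{S}\|_{L^2(\partial\mathcal{S}_0)}\le C\|\tilde{v}\|_{\underline{\mathcal{V}}}$, and the bulk term by $\|D(H)\|_{L^2(\mathcal{F}_0)}\|D(\tilde{v})\|_{L^2(\mathcal{F}_0)}\le C\|\nabla H\|_{L^2(\mathcal{F}_0)}\|\tilde{v}\|_{\underline{\mathcal{V}}}$, since $\nabla H\in L^2(\mathcal{F}_0)$; hence $|a(u,\tilde{v})|\le C(\|\tilde{u}\|_{\underline{\mathcal{V}}}+|\beta_1|)\|\tilde{v}\|_{\underline{\mathcal{V}}}$.

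The main obstacle is the term $b(\tilde{u},\tilde{v},H)$: one must recognise that, although $H\notin L^2(\mathcal{F}_0)$, the genuinely non-integrable parts of the defining expression cancel after the rearrangement above, leaving a quantity that is finite and even continuous for the weaker $\mathcal{H}$-norm; this is precisely the structural role played by the square-integrability of $H-\tfrac{(x-x_0)^\perp}{2\pi|x-x_0|^2}$, of $\nabla H$, and of $H^\perp-(x-x_0)^\perp\cdot\nabla H$ in \eqref{EstiHache}. The only other non-routine ingredient is the Blasius-type vanishing of $b(H,\cdot,H)$, for which we invoke \cite[Lemma~A.1]{GLS}.
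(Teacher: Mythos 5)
Your proof is correct and follows essentially the same route as the paper: split $b(u,\tilde v,w)$ by trilinearity into the four pure pieces, handle $b(H,\cdot,\cdot)$ via $H\in L^\infty$, handle the delicate piece $b(\tilde u,\tilde v,H)$ by regrouping $-r_{\tilde u}\,(x^\perp\cdot\nabla H-H^\perp)\cdot\tilde v$ so that the non-square-integrable parts cancel thanks to \eqref{EstiHache}, and invoke the Blasius lemma for $b(H,\tilde v,H)=0$. The extra details you supply (the explicit $\mathcal{H}\times\mathcal{H}$ bound for $b(\tilde u,\tilde v,H)$, the reduction of $b(H,\tilde v,H)$ to boundary integrals, and polarisation for the skew-symmetry) are consistent with, and slightly more explicit than, the paper's argument.
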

\begin{proof} Point \textit{i.} is direct consequence of point \textit{ii.} so we begin by \textit{ii.} For $ (\tilde{u}, \tilde{v}) \in \underline{\mathcal{V}}\times \underline{\mathcal{V}} $,
\begin{equation*}
 b(\tilde{u},\tilde{v},H) =
\int_{\mathcal{F}_0 }  [   \tilde{u}  \cdot \nabla H ] \cdot \tilde{v}
-  \int_{\mathcal{F}_0 } [ \ell_{\tilde{u}}  \cdot \nabla H ]  \cdot \tilde{v}
- r_{\tilde{u}} \int_{\mathcal{F}_0 } (x^\perp \cdot  \nabla  H -H^\perp ) \cdot \tilde{v} 
\end{equation*}
is well defined thanks to  \eqref{EstiHache}, moreover there exists $ C > 0$ such that $
|b(\tilde{u},\tilde{v}, H)| \leq C\|\tilde{u}\|_{\underline{\mathcal{V}}}\|\tilde{v}\|_{\underline{\mathcal{V}}} $.
For $ (\tilde{v}, \tilde{w}) \in  \underline{\mathcal{V}}  \times  \underline{\mathcal{V}}$,
$ b(H,\tilde{v}, \tilde{w} ) = \int_{\mathcal{F}_0 }[ H  \cdot\nabla] \tilde{w}  \cdot \tilde{v}$.
Thanks to \eqref{EstiHache}, it is clear that
$  |b(H,\tilde{v}, \tilde{w} )| \leq C \|\tilde{v}\|_{\underline{\mathcal{V}}}\|\tilde{w}\|_{\underline{\mathcal{V}}}$.
Moreover, using Blasius lemma, we have for any $\tilde{v} \in \underline{\mathcal{V}}$, $
 b(H,\tilde{v},H) = 0 $.
This concludes the proof of point \textit{i.} and \textit{ii.}.
For \textit{iii.}, we use an integration by parts to see that for any $ \tilde{v} \in \underline{\mathcal{V}}$ we have
$
b(H,\tilde{v},\tilde{v}) = 0$,
which implies, together with point \textit{i.} of Proposition \ref{pro:fab}, that  it holds $
b(u,\tilde{v},\tilde{v}) = 0 $ for any $ u = \tilde{u} + \beta H \in \underline{\mathcal{V}} \oplus \R H $.
Integrating by part we have also that for any  $u \in \underline{\mathcal{V}} \oplus \R  H$, for any  $\tilde{v}, \tilde{w} \in {\mathcal{V}}$,
$
b(u, \tilde{v}, \tilde{w}) = - b(u,\tilde{w} , \tilde{v}) $.

Point \textit{iv.} is trivial after notice that $ \nabla H \in L^{\infty} $ and recall \textit{ii.} of Proposition \ref{pro:fab}.

Finally to prove \textit{v.} we use the same procedure of point \textit{iii.} of Proposition \ref{pro:fab}. 
\end{proof}

We now introduce the definition of weak solution, with possibly unbounded energy,  of the system \eqref{NS11-2d}-\eqref{Solide1ci-2d}.
\begin{Definition}
 Let $u_{0} = \tilde{u}_0 + \beta H \in  \mathcal{H} \oplus \R  H$ and  $T >0$. We say that
$u = \tilde{u} + \beta H $ where $$ \tilde{u} \in   C ( [0,T ]  ;  \mathcal{H}) \cap  L^2 ( 0,T  ; \underline{\mathcal{V}})$$
 is a weak solution of \eqref{NS11-2d}-\eqref{Solide1ci-2d} if for any test function $ \varphi \in C^1([0,T];\mathcal{H})$ such that $ \varphi
|_{\overline{\mathcal{F}_0}} \in C^{1} ([0,T ] ;  C^{\infty }_{c} (\overline{\mathcal{F}_0} ))$
\begin{equation*}
(\tilde{u},\varphi)_{\mathcal{H}}  (t) -  (\tilde{u}_{0},\varphi |_{t=0})_{\mathcal{H}}   =
 \int_{0}^{t} \Big[ (\tilde{u} , \partial_{t} \varphi)_{\mathcal{H}}
 + 2  \nu a(u,\varphi)  -  b(u,\varphi , u)    \Big] .
\end{equation*}
\end{Definition}
Observe that we took into account here that $H$ is time independent, and $\beta$ as well. For our convenience we give an equivalent but more explicit definition of weak formulation of the system \eqref{NS11-2d}-\eqref{Solide1ci-2d}.  
\begin{Definition}[Weak solution with $\beta$ circulation at infinity] 
 Let $ \tilde{u}_0 \in \mathcal{H} $ and $ T > 0 $ given. We say that 
\begin{equation*}
\tilde{u} \in C([0,T];\mathcal{H} ) \cap L^2((0,T);\underline{\mathcal{V}})
\end{equation*}
is a weak solution for 2D Navier-Stokes with $\beta $ circulation at infinity if for every test function $ \varphi \in C^1([0,T];\mathcal{H}) $ with $\varphi|_{\overline{\mathcal{F}_0}} \in C^{1}([0,T];C^{\infty}_c(\overline{\mathcal{F}_0}))    $, it holds
\begin{align*}
(\tilde{u}(t),\varphi(t))_{\mathcal{H}}-(\tilde{u}_0,\varphi(0))_{\mathcal{H}}= \int_0^t \Big[&(\tilde{u},\partial_t \varphi)_{\mathcal{H}}+2\nu a(\tilde{u},\varphi)+2\beta \nu a(H,\varphi)  \\ & -b(\tilde{u},\varphi,\tilde{u})-\beta b(H,\varphi,\tilde{u})-\beta b(\tilde{u},\varphi, H) \Big]dt.
\end{align*}
\end{Definition}

To conclude this section, we observe that any smooth solution of \eqref{NS11-2d}-\eqref{Solide1ci-2d} with infinite energy is also a weak solution.
\begin{Proposition}
Let $ u = \tilde{u} + \beta H $ a smooth solution of \eqref{NS11-2d}-\eqref{Solide1ci-2d} with initial data $ u_0 = \tilde{u}_0 + \beta H $, then $ \tilde{u} $ is a weak solution for 2D Navier-Stokes with $ \beta $ circulation at infinity.  
\end{Proposition}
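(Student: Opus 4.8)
The plan is to go from the classical formulation to the weak one: pair the momentum equation with an admissible test function $\varphi$, integrate over $\mathcal{F}_0$, and reorganise every term into the abstract quantities $(\cdot,\cdot)_{\mathcal{H}}$, $a$ and $b$ by means of the boundary conditions \eqref{NS13-2d}--\eqref{NS14-2d} and the solid ODEs \eqref{Solide11-2d}--\eqref{Solide12-2d}. First I would rewrite \eqref{NS11-2d} as $\partial_t u + [(u-u_\mathcal{S})\cdot\nabla]u + r u^\perp = \div\sigma$, which is legitimate because $\div u = 0$ (so also $\div\tilde u=0$, since $\div H=0$) gives $\nu\Delta u - \nabla p = \div\sigma$ with $\sigma=-p\,\Id_2+2\nu D(u)$. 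As in the finite-energy setting, the rigid velocity is read off $\tilde u$ by extending it in $\mathcal{S}_0$ as $\ell(t)+r(t)x^\perp$, so that $\ell_{\tilde u(t)}=\ell(t)$, $r_{\tilde u(t)}=r(t)$ and $u_\mathcal{S}=\tilde u_\mathcal{S}$, in accordance with the convention fixed before Proposition~\ref{ext:ab}. Given a test function $\varphi\in C^1([0,T];\mathcal{H})$ with $\varphi|_{\overline{\mathcal{F}_0}}\in C^1([0,T];C^\infty_c(\overline{\mathcal{F}_0}))$, I would multiply this identity by $\varphi(t)$ and integrate over $\mathcal{F}_0$; since $\varphi$ has compact support all integrals are over a bounded region, so the slow decay of $H$ recorded in \eqref{EstiHache} is irrelevant and, $u$ being smooth, every integration by parts below is licit.

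The second step is the integration by parts of the stress term together with the analysis of the boundary integral. Using $\div\varphi=0$, the symmetry of $D(u)$, and the compact support of $\varphi$, one gets $\int_{\mathcal{F}_0}\div\sigma\cdot\varphi = \int_{\partial\mathcal{S}_0}(\sigma n)\cdot\varphi\,ds - 2\nu\int_{\mathcal{F}_0}D(u):D(\varphi)$. On $\partial\mathcal{S}_0$ I would split $\varphi$ into its normal and tangential parts and invoke: the Navier condition \eqref{NS14-2d}, which gives $(\sigma n)\cdot\tau=2\nu(D(u)n)\cdot\tau=-2\nu\alpha(u-u_\mathcal{S})\cdot\tau$; the no-penetration condition \eqref{NS13-2d}, which makes $u-u_\mathcal{S}$ purely tangential on $\partial\mathcal{S}_0$; and the fact that the normal trace of $\varphi$ coincides with that of $\varphi_\mathcal{S}:=\ell_\varphi+r_\varphi x^\perp$, because $\varphi$ is divergence-free across $\R^2$ and rigid in $\mathcal{S}_0$. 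Writing $\varphi\cdot\tau=(\varphi-\varphi_\mathcal{S})\cdot\tau+\varphi_\mathcal{S}\cdot\tau$ this produces $\int_{\partial\mathcal{S}_0}(\sigma n)\cdot\varphi\,ds = -2\nu\alpha\int_{\partial\mathcal{S}_0}(u-u_\mathcal{S})\cdot(\varphi-\varphi_\mathcal{S})\,ds + \int_{\partial\mathcal{S}_0}(\sigma n)\cdot\varphi_\mathcal{S}\,ds$, so that after recombination with the bulk term (the two bulk deformation contributions $2\nu\int_{\mathcal{F}_0}D(u):D(\varphi)$ cancelling) the pairing reads $\int_{\mathcal{F}_0}\partial_t\tilde u\cdot\varphi + \int_{\mathcal{F}_0}[(u-u_\mathcal{S})\cdot\nabla]u\cdot\varphi + r\int_{\mathcal{F}_0}u^\perp\cdot\varphi = 2\nu a(u,\varphi) + \int_{\partial\mathcal{S}_0}(\sigma n)\cdot\varphi_\mathcal{S}\,ds$. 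Finally, since $\varphi_\mathcal{S}=\ell_\varphi+r_\varphi x^\perp$, the leftover boundary integral equals $\ell_\varphi\cdot\int_{\partial\mathcal{S}_0}\sigma n\,ds + r_\varphi\int_{\partial\mathcal{S}_0}x^\perp\cdot\sigma n\,ds$, which by \eqref{Solide11-2d}--\eqref{Solide12-2d} is $-m\ell_\varphi\cdot\ell'(t) - mr\,\ell_\varphi\cdot\ell^\perp - \mathcal{J}r_\varphi r'(t)$.

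The third step is a purely algebraic reorganisation. Moving the solid inertia terms to the left, the identity becomes $\big(\int_{\mathcal{F}_0}\partial_t\tilde u\cdot\varphi + m\ell_\varphi\cdot\ell'(t) + \mathcal{J}r_\varphi r'(t)\big) + \big(\int_{\mathcal{F}_0}[(u-u_\mathcal{S})\cdot\nabla u]\cdot\varphi + r\int_{\mathcal{F}_0}u^\perp\cdot\varphi + mr\,\ell_\varphi\cdot\ell^\perp\big) = 2\nu a(u,\varphi)$. Using the identity $\xi^\perp\cdot\eta=-\xi\cdot\eta^\perp$ for $\xi,\eta\in\R^2$, one has $r\int_{\mathcal{F}_0}u^\perp\cdot\varphi = -r\int_{\mathcal{F}_0}\varphi^\perp\cdot u$ and $mr\,\ell_\varphi\cdot\ell^\perp = -mr\,\ell_\varphi^\perp\cdot\ell$, so the second bracket is exactly $b(u,\varphi,u)$ (recall $\ell_u=\ell$, $r_u=r$, $u_\mathcal{S}=\tilde u_\mathcal{S}$). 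Since $H$ and $\beta$ are time-independent, $\partial_t\tilde u=\partial_t u$ and, expanding the $\mathcal{H}$-inner product and using $\ell_{\partial_t\varphi}=\partial_t\ell_\varphi$, $r_{\partial_t\varphi}=\partial_t r_\varphi$, one has $\frac{d}{dt}(\tilde u,\varphi)_{\mathcal{H}} - (\tilde u,\partial_t\varphi)_{\mathcal{H}} = \int_{\mathcal{F}_0}\partial_t\tilde u\cdot\varphi + m\ell'(t)\cdot\ell_\varphi + \mathcal{J}r'(t)r_\varphi$; hence the identity is $\frac{d}{dt}(\tilde u,\varphi)_{\mathcal{H}} - (\tilde u,\partial_t\varphi)_{\mathcal{H}} + b(u,\varphi,u) = 2\nu a(u,\varphi)$. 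Integrating on $[0,t]$ gives precisely the weak formulation with data $u_0=\tilde u_0+\beta H$ from the Definition preceding this Proposition, and expanding $2\nu a(u,\varphi)=2\nu a(\tilde u,\varphi)+2\beta\nu a(H,\varphi)$ together with $b(u,\varphi,u)=b(\tilde u,\varphi,\tilde u)+\beta b(H,\varphi,\tilde u)+\beta b(\tilde u,\varphi,H)+\beta^2 b(H,\varphi,H)$ and $b(H,\varphi,H)=0$ (Blasius lemma, Proposition~\ref{ext:ab}, point~\textit{ii.}) yields exactly the identity defining a weak solution with $\beta$ circulation at infinity; that $\tilde u\in C([0,T];\mathcal{H})\cap L^2(0,T;\underline{\mathcal{V}})$ is immediate for a smooth solution.

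The step I expect to be the main obstacle is the boundary bookkeeping of the second paragraph: what must emerge is precisely $\int_{\partial\mathcal{S}_0}(u-u_\mathcal{S})\cdot(\varphi-\varphi_\mathcal{S})$, and getting the exact difference $\varphi-\varphi_\mathcal{S}$ relies on the fact that a field in $\mathcal{H}$ need not be continuous across $\partial\mathcal{S}_0$ -- only its normal trace is -- so one must keep the normal contribution $((\sigma n)\cdot n)(\varphi\cdot n)$ and recombine it with the tangential one into $\int_{\partial\mathcal{S}_0}(\sigma n)\cdot\varphi_\mathcal{S}\,ds$ before the solid equations are invoked. Once that is in place, the identification of the convective and gyroscopic terms with $b$ and of the inertial terms with $\frac{d}{dt}(\cdot,\cdot)_{\mathcal{H}}$ is routine.
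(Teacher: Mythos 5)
Your proposal is correct and is exactly the argument the paper intends: its proof is the one\-/line instruction ``multiply \eqref{NS11-2d} by the test function, integrate over $\mathcal{F}_0$, integrate by parts and use the boundary conditions'', and you have simply carried that out in full, with the right sign conventions, the correct recombination of the boundary terms into $a(u,\varphi)$ and $\int_{\partial\mathcal{S}_0}(\sigma n)\cdot\varphi_{\mathcal{S}}\,ds$, and the correct use of Blasius to kill $b(H,\varphi,H)$. No gaps; your version is just a fully detailed rendering of the paper's sketch.
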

\begin{proof}
Multiply the equation \eqref{NS11-2d} by the test function $ \varphi $, integrate in all $\mathcal{F}_0$, integrate by parts and use the boundary condition.
\end{proof}

\section{Result}

The following result establishes the existence of global weak solutions of the system \eqref{NS11-2d}-\eqref{Solide1ci-2d}.
\begin{Theorem}
\label{2dNSBodyWeak}
Let $ \tilde{u}_0 \in \mathcal{H} $ and let $ T > 0 $. Then there exists a weak solution $ \tilde{u} \in \mathcal{H} $ of 2D Navier-Stokes with $\beta$ circulation at infinity in $ C([0,T]; \mathcal{H})\cap L^{2}(0,T;\underline{\mathcal{V}}) $ such that satisfies the following energy inequality: for almost every $ t \in [0,T] $ we have
\begin{equation}
\label{NSBodyWeakEnergy}
\frac{1}{2}\|\tilde{u}(t,.)\|^2_{\mathcal{H}}+2\nu\int_{(0,t)\times\mathcal{F}_0}  |D(\tilde{u})|^2 +  2\alpha \nu \int_0^t\int_{\partial \mathcal{S}_0}|\tilde{u}-\tilde{u}_{\mathcal{S}}|^2 \leq \nonumber  C(1+\|\tilde{u}_0\|^2_{\mathcal{H}}),
\end{equation}   
where $ C $ depends on $ T, \mathcal{S}_0, \beta$ and $ \nu $. 
%
%
Moreover $(l,r) \in H^1(0,T; \R^2 \times \R)$.
\end{Theorem}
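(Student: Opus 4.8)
The plan is to obtain $\tilde u$ by a Galerkin scheme on $\mathcal{H}$, to derive uniform bounds from an energy estimate, and to pass to the limit; the two delicate points are that the energy estimate must close \emph{despite} the infinite-energy correction $\beta H$, and that $\underline{\mathcal{V}}\hookrightarrow\mathcal{H}$ is \emph{not} compact on the exterior domain, so compactness of the fluid part is only available locally. I would split $\mathcal{H}=\mathcal{H}_0\oplus\mathcal{H}_0^{\perp}$, where $\mathcal{H}_0=\{\phi\in\mathcal{H}:\ell_\phi=r_\phi=0\}$ is the $L^2$-closure of the smooth compactly supported divergence-free fields on $\mathcal{F}_0$ and $\mathcal{H}_0^{\perp}$ is the $3$-dimensional complement carrying the rigid degrees of freedom (spanned by Kirchhoff-type potential flows, smooth up to $\partial\mathcal{F}_0$, hence in $\widehat{\mathcal{V}}$), and pick an $(\cdot,\cdot)_{\mathcal{H}}$-orthonormal basis $(e_k)_{k\ge1}$ of $\mathcal{H}$ with $e_1,e_2,e_3$ a basis of $\mathcal{H}_0^{\perp}$ and $e_k\in C^{\infty}_{c,\sigma}(\mathcal{F}_0)$ for $k\ge4$; by the density result (Lemma \ref{lem2}) finite linear combinations of the $e_k$ are dense in $\mathcal{H}$, in $\underline{\mathcal{V}}$, and in the test-function class for a topology in which every term of the weak formulation is continuous --- here the $\widehat{\mathcal{V}}$-continuity of $b$ from Proposition \ref{pro:fab}(ii) is essential. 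One then looks for $\tilde u_n(t)=\sum_{k\le n}g_k^{n}(t)e_k$ solving, for $k=1,\dots,n$,
\[
\tfrac{d}{dt}(\tilde u_n,e_k)_{\mathcal{H}}=2\nu a(\tilde u_n,e_k)+2\beta\nu a(H,e_k)-b(\tilde u_n,e_k,\tilde u_n)-\beta b(H,e_k,\tilde u_n)-\beta b(\tilde u_n,e_k,H),
\]
with $\tilde u_n(0)$ the $\mathcal{H}$-projection of $\tilde u_0$; the right-hand side is a quadratic polynomial in $g^n$, so Cauchy--Lipschitz gives a local solution, made global by the energy bound.

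Testing with $\tilde u_n$ itself kills the self-interaction --- $b(\tilde u_n,\tilde u_n,\tilde u_n)=b(H,\tilde u_n,\tilde u_n)=0$ by Proposition \ref{ext:ab}(iii) --- and leaves
\[
\tfrac12\tfrac{d}{dt}\|\tilde u_n\|_{\mathcal{H}}^{2}+2\nu\|D(\tilde u_n)\|_{L^2(\mathcal{F}_0)}^{2}+2\alpha\nu\|\tilde u_n-(\tilde u_n)_{\mathcal{S}}\|_{L^2(\partial\mathcal{S}_0)}^{2}=2\beta\nu\,a(H,\tilde u_n)-\beta\,b(\tilde u_n,\tilde u_n,H).
\]
The decisive point is that, by the very computation in the proof of Proposition \ref{ext:ab}(ii) (using $\nabla H\in L^{\infty}\cap L^{2}(\mathcal{F}_0)$ and $x^{\perp}\cdot\nabla H-H^{\perp}\in L^{2}(\mathcal{F}_0)$ from \eqref{EstiHache}), $|b(\tilde u_n,\tilde u_n,H)|\le C\|\tilde u_n\|_{\mathcal{H}}^{2}$ --- and \emph{not} merely $\le C\|\tilde u_n\|_{\underline{\mathcal{V}}}^{2}$, which would be useless here --- while $2\beta\nu\,a(H,\tilde u_n)$ is absorbed into the dissipation up to an additive constant by Young's inequality. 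Gr\"onwall's lemma then gives $\|\tilde u_n(t)\|_{\mathcal{H}}^{2}\le C(1+\|\tilde u_0\|_{\mathcal{H}}^{2})$ on $[0,T]$, and, after integration in time and a Korn-type inequality adapted to $\mathcal{F}_0$, a uniform bound for $\tilde u_n$ in $L^{\infty}(0,T;\mathcal{H})\cap L^{2}(0,T;\underline{\mathcal{V}})$; testing with a fixed $e_k$ and using Propositions \ref{pro:fab}--\ref{ext:ab} also bounds $\tfrac{d}{dt}(\tilde u_n,e_k)_{\mathcal{H}}$ in $L^{1}(0,T)$ uniformly in $n$.

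Since $\underline{\mathcal{V}}\hookrightarrow\mathcal{H}$ is not compact, I would argue locally: on each ball $B_R$, $\tilde u_n$ is bounded in $L^{2}(0,T;H^{1}(B_R\cap\mathcal{F}_0))$ with $\partial_t\tilde u_n$ bounded in $L^{1}$ of a negative-order space, so Aubin--Lions gives relative compactness in $L^{2}((0,T)\times(B_R\cap\mathcal{F}_0))$; a diagonal extraction in $R$ yields $\tilde u_n\to\tilde u$ strongly in $L^{2}(0,T;L^{2}_{\mathrm{loc}}(\overline{\mathcal{F}_0}))$ (hence $\ell_{\tilde u_n}\to\ell_{\tilde u}$, $r_{\tilde u_n}\to r_{\tilde u}$ in $L^{2}(0,T)$, and strong convergence of the traces on $\partial\mathcal{S}_0$), together with $\tilde u_n\cvwstar\tilde u$ in $L^{\infty}(0,T;\mathcal{H})$ and $\tilde u_n\cv\tilde u$ in $L^{2}(0,T;\underline{\mathcal{V}})$; the uniform $W^{1,1}(0,T)$-bound on $t\mapsto(\tilde u_n,e_k)_{\mathcal{H}}$ gives uniform-in-time convergence (Arzel\`a--Ascoli), so $\tilde u\in C_w([0,T];\mathcal{H})$ and $\tilde u(0)=\tilde u_0$. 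The nonlinear terms pass to the limit after rewriting $b(\tilde u_n,\varphi,\tilde u_n)$ via antisymmetry (so that only the smooth compactly supported test function is differentiated): the genuinely quadratic terms converge by the strong $L^{2}_{\mathrm{loc}}$ convergence and the convergence of $(\ell_{\tilde u_n},r_{\tilde u_n})$, while $b(H,\varphi,\tilde u_n)=\int_{\mathcal{F}_0}(H\cdot\nabla\tilde u_n)\cdot\varphi$ converges by the weak convergence of $\nabla\tilde u_n$; the linear terms converge by weak convergence. Doing this first for $\varphi\in\bigcup_m C^{1}([0,T];\mathrm{span}(e_1,\dots,e_m))$ and then invoking density (Lemma \ref{lem2}) shows $\tilde u$ is a weak solution; taking $\liminf$ in the Galerkin energy identity --- weak lower semicontinuity on the left, and on the right the convergence of $\int_0^t a(H,\tilde u_n)$ and of $\int_0^t b(\tilde u_n,\tilde u_n,H)$ (split the $\mathcal{F}_0$-integral into $B_R$, where the convergence is strong, and its complement, where $\nabla H$ is small in $L^{2}\cap L^{\infty}$) --- gives the energy inequality \eqref{NSBodyWeakEnergy}. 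Strong continuity $\tilde u\in C([0,T];\mathcal{H})$ then follows by the standard $2$D argument: the interpolation bound $\tilde u\in L^{4}(0,T;L^{4})$ turns the energy inequality into an equality, so $t\mapsto\|\tilde u(t)\|_{\mathcal{H}}$ is continuous and, combined with weak continuity, yields $\tilde u\in C([0,T];\mathcal{H})$.

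Finally, for $(l,r)\in H^{1}(0,T;\R^{2}\times\R)$: since $e_k\equiv0$ on $\mathcal{S}_0$ for $k\ge4$, one has $(l_n(t),r_n(t))=\sum_{k=1}^{3}g_k^{n}(t)(\ell_{e_k},r_{e_k})$ with $g_k^{n}(t)=(\tilde u_n(t),e_k)_{\mathcal{H}}$, so $(l_n',r_n')$ is a fixed, $n$-independent, linear combination of the right-hand sides $R_k^{n}(t)$ of the Galerkin equations for $k=1,2,3$. Each $R_k^{n}$ is bounded in $L^{2}(0,T)$ uniformly in $n$: the $a$-terms by $C(1+\|\tilde u_n\|_{\underline{\mathcal{V}}})\in L^{2}(0,T)$; the terms $b(H,e_k,\tilde u_n)$, $b(\tilde u_n,e_k,H)$ by $C\|\tilde u_n\|_{\underline{\mathcal{V}}}$ (Proposition \ref{ext:ab}(ii)); and $b(\tilde u_n,e_k,\tilde u_n)=-b(\tilde u_n,\tilde u_n,e_k)$ by $C\|\tilde u_n\|_{\mathcal{H}}^{2}\in L^{\infty}(0,T)$, \emph{via Proposition \ref{pro:fab}(ii)} --- this last point being exactly where the $\mathcal{H}$-norm control of $b$ with a Lipschitz last argument is indispensable (the $\underline{\mathcal{V}}$-bound would give only $L^{1}$ in time). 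Hence $(l_n,r_n)$ is bounded in $H^{1}(0,T;\R^{2}\times\R)$, and passing to the limit gives the claim. In short, I expect the two genuine difficulties to be (i) closing the energy estimate in the infinite-energy regime, which hinges entirely on the $\mathcal{H}$-norm rather than $\underline{\mathcal{V}}$-norm control of $b(\cdot,\cdot,H)$, and (ii) compensating the non-compactness of $\underline{\mathcal{V}}\hookrightarrow\mathcal{H}$ by the localisation and diagonal argument; the density Lemma \ref{lem2} underpins both the construction of a workable Galerkin basis and the final passage to the limit.
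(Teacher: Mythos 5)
Your proposal is correct in substance and follows the same backbone as the paper's proof: a Galerkin scheme built on a basis supplied by the density Lemma \ref{lem2}, an energy estimate whose closure rests precisely on the bound $|b(\tilde u_n,\tilde u_n,H)|\le C\|\tilde u_n\|_{\mathcal{H}}^{2}$ coming from \eqref{EstiHache} (you correctly identify this as the decisive point), local Aubin--Lions compactness with a diagonal extraction in the radius, and the Kirchhoff fields for the $H^{1}$ regularity of $(\ell,r)$. The one structural departure is that you dispense with the truncation of the solid rotation field: the paper replaces $b$ by $b_R$, in which $x^{\perp}$ is cut off to $\tfrac{R}{|x|}x^{\perp}$ far from the body, solves each truncated problem by Galerkin, and then performs a second limit $R\to\infty$. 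The purpose of $b_R$ is that it is continuous on $\underline{\mathcal{V}}\times\underline{\mathcal{V}}\times\underline{\mathcal{V}}$ and yields the bound $\|f_N\|_{\underline{\mathcal{V}}'}\le C(1+\|\tilde u_N\|_{\underline{\mathcal{V}}}+\|\tilde u_N\|_{\underline{\mathcal{V}}}^{2})$ on the time derivative, so that compactness can be run through the Galerkin projection in $\underline{\mathcal{V}}$. You instead exploit the fact that the approximants are finite combinations of elements of $\mathcal{Y}$ and of Kirchhoff-type fields, hence lie in $\widehat{\mathcal{V}}\subset\mathcal{V}$, so that $b(\tilde u_n,\tilde u_n,\tilde u_n)=0$ and the antisymmetry are available at the approximate level without any truncation. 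This buys a single limit instead of a double one, at the price that your $f_n$ is controlled only in $\mathcal{V}'$ (or $\widehat{\mathcal{V}}'$), not in $\underline{\mathcal{V}}'$; your fallback --- equicontinuity of the scalar maps $t\mapsto(\tilde u_n,e_k)_{\mathcal{H}}$ via the uniform $W^{1,1}(0,T)$ bound, combined with the $L^{2}(0,T;H^{1}_{\mathrm{loc}})$ bound --- is the right way to recover $L^{2}(0,T;L^{2}_{\mathrm{loc}})$ compactness here, and you should rely on it rather than on a bare citation of Aubin--Lions, since the $\mathcal{H}$-orthogonal projection onto $\mathrm{span}(e_1,\dots,e_n)$ is not obviously uniformly bounded on $\mathcal{V}$.

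One point in your set-up must be corrected, because as written it reproduces exactly the error in \cite{PS} that Lemma \ref{lem2} is designed to repair. You take $e_k\in C^{\infty}_{c,\sigma}(\mathcal{F}_0)$ for $k\ge 4$, i.e.\ fields compactly supported in the \emph{open} fluid domain, hence vanishing near $\partial\mathcal{S}_0$. The closure of their span in $\underline{\mathcal{V}}$ consists of fields with zero full trace on $\partial\mathcal{S}_0$, whereas $\underline{\mathcal{V}}\cap\mathcal{H}_0$ only imposes zero \emph{normal} trace; this is precisely the counterexample given before Lemma \ref{den:BOOM}. With such a basis the limit would solve a problem in which the tangential degrees of freedom on $\partial\mathcal{S}_0$ are frozen, and the Navier friction term $\alpha\int_{\partial\mathcal{S}_0}$ would never be seen. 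The $e_k$, $k\ge4$, must be taken in $\mathcal{Y}\cap\mathcal{H}_0$ as in \eqref{Y} --- smooth up to $\partial\mathcal{S}_0$, compactly supported in $\overline{\mathcal{F}_0}$, with zero normal but generally nonzero tangential trace --- and then Lemma \ref{lem2} does give density of the span in $\underline{\mathcal{V}}$ (note also that the $\mathcal{H}$-orthogonal projection of an element of $\mathcal{Y}$ onto $\mathcal{H}_0$ is no longer compactly supported, since it carries the $O(|x|^{-2})$ tail of the Kirchhoff fields, so density of the span must be argued through Lemma \ref{lem2} rather than read off from the decomposition). With that correction, and granting the usual level of detail on the final continuity in time (which the paper itself does not spell out), your argument goes through.
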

The motivation that drives us to study this special infinite-energy solutions is to study the ``inviscid+shrinking-body'' limit. For the inviscid limit we recall the result from \cite{PS}, where the authors proved that as $  \nu $ goes to zero, the solutions $ u_{\nu} $ converge to the solution of the corresponding Euler system. In \cite{PS}, the ``rigid+body'' system occupies all the space $ \mathbb{R}^3$, in the case of $ \mathbb{R}^2 $ the situation is a bit more tricky and the argument of \cite{PS} holds at least in the case the solid is a disk. Moreover by the work \cite{GLS}, we know that as the size of the object goes to zero (and the mass remains constant) the system converges to a variant of the vortex-wave system where the vortex, placed in the point occupied by the shrunk body is accelerated by a Kutta-Joukowski-type lift force. In the massless case, i.e. the density of the object is constant respect to the scale of the object, similar results are available when the fluid satisfies incompressible Navier-Stokes equation  and no-slip boundary conditions are prescribe on the boundary of the solid, for example in \cite{LT} it is proven that for a fixed viscosity the ``fluid+disk'' system converges to the Navier-Stokes system in all $\mathbb{R}^2 $ when the object shrinks to a point. The goal of further studies is to understand the limiting equations when both the viscosity and the size of the object go to zero at the same time (in both mass and massless cases) and to find in the limit a similar system of the one in \cite{GLS}. We expect that the appearance of a Kutta-Joukowski-type lift force in the limiting system is strictly related to the presence of the circulation due to $ \beta H$, i.e. in the absence of this term we do not expect to see any force on the point mass in the limit. Indeed in the case where the vorticity is integrable, $ \beta H $ denotes the circulation at infinity.  

Before moving to the proof of the theorem we present two density results. For the first one we do not claim originality but we were not able to find a reference in the literature. The second result is one of the main contribution of the paper. Lemma \ref{lem2} is also essential in \cite{PS}, where we propose to change the set $ \mathcal{T} = \{ \varphi \in C^{\infty}_{c, \sigma}(\mathbb{R}^2) | D(\varphi) = 0 \text{ in } \mathcal{S}_0 \}$ with the set defined in 
(\ref{Y}) in the proof of Theorem 1. The set $ \mathcal{T}$ is not dense in $ \underline{\mathcal{V}} $ neither in $\mathcal{V} $. On the other hand we will introduce below, cf. (\ref{Y}), a set $ \mathcal{Y}$ which is dense and has all the property to make the proof of Theorem 1 of \cite{PS} working. To see that $ \mathcal{T} $ is not dense in $\mathcal{V}$, it is enough to consider $ \mathcal{S}_0 = B_1(0) $ and the function 
\begin{equation*}
f(x) = \begin{cases} 0 & \quad \text{ in } B_1(0), \\  \nabla^{\perp} ( x^2 \chi ) & \quad \text{ elsewhere, }   \end{cases}
\end{equation*}
where $ \chi $ is a smooth cut off such that $ \chi \equiv 1 $ in $ B_2(0) $ and $ \chi \equiv 0 $ outside $ B_4(0) $. It is clear that $ f \in \mathcal{V} \subset \underline{\mathcal{V}} $. Suppose by contradiction that there exist approximations $ f_{\varepsilon} \in \mathcal{T} $ such that $ f_{\varepsilon} \to f $ in $ \mathcal{V} $, then $ l_{f_{\varepsilon}} \to l_{f} = 0 $  and $ r_{f_{\varepsilon}} \to r_{f} = 0 $. $f_{\varepsilon}|_{\mathbb{R}^2 \setminus B_1(0) } \to f|_{\mathbb{R}^2 \setminus B_1(0) } $ in $ H^{1}(\mathbb{R}^2 \setminus B_1(0))$, then by trace theorem $  f_{\varepsilon}|_{\partial B_1(0)} \rightarrow f|_{\partial B_1(0)}  $ and  $ f_{\varepsilon}|_{\partial B_1(0)} \to 0 $ in $ L^2(\partial B_1(0)) $ but $ f|_{\partial B_1(0)} = 2 x^{\perp}$ which is a contradiction.     

We start by presenting the first density result.

\begin{Lemma}
\label{den:BOOM}
Let $ \Omega $ an open, bounded  subset of $\mathbb{R}^2 $ with smooth boundary such that $\partial \Omega = \cup \Gamma_i $ where $ \Gamma_i $ for $ i = 0,\dots, n$ are open connected components of the boundary with $ \Gamma_i\cap\Gamma_j = \emptyset $ for $ i\neq j $, then the set $ C^{\infty}_{\sigma}(\Omega) \cap L^2_{\sigma}(\Omega) $ of smooth divergence-free functions with $ 0 $ normal component on the boundary $ \partial \Omega $ is dense in $ H^1(\Omega) \cap L^2_{\sigma}(\Omega)$. 
\end{Lemma}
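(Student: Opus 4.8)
The plan is to build the approximation in two stages. First I would reduce the problem to approximating a given $u\in H^1(\Omega)\cap L^2_\sigma(\Omega)$ by $H^1$ divergence-free fields that are \emph{smooth up to the boundary} with vanishing normal trace; afterwards a standard Friedrichs mollification argument (using a diffeomorphism flattening each component $\Gamma_i$ of $\partial\Omega$ and mollifying tangentially, then taking an interior regularisation) upgrades such a field to one in $C^\infty_\sigma(\Omega)$, keeping the constraints $\div\phi=0$ and $\phi\cdot n=0$ exactly. The point of the lemma is really the first stage, and the natural tool there is the stream-function formulation available in two dimensions: since $\Omega$ is a bounded planar domain, a divergence-free field $u$ with $u\cdot n=0$ on $\partial\Omega$ can be written $u=\nabla^\perp\psi$ for some $\psi\in H^2(\Omega)$, where $\psi$ is constant on each connected component $\Gamma_i$ of the boundary (the constants being the fluxes, which all vanish here because the total flux is zero and each $\Gamma_i$ is a full component of $\partial\Omega$); one may normalise $\psi=0$ on $\Gamma_0$.

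With this in hand the approximation becomes a statement about scalar functions: I need to approximate $\psi\in H^2(\Omega)$ by functions $\psi_\eps\in C^\infty(\overline\Omega)$ that are constant on each $\Gamma_i$, with $\psi_\eps\to\psi$ in $H^2(\Omega)$, and then set $u_\eps=\nabla^\perp\psi_\eps$, which automatically lies in $C^\infty(\overline\Omega)$, is divergence-free, and satisfies $u_\eps\cdot n=\partial_\tau\psi_\eps=0$ on $\partial\Omega$ since $\psi_\eps$ is locally constant there. To produce $\psi_\eps$ I would first subtract a fixed smooth function interpolating the boundary constants $c_i$ (here $c_i=0$, so this step is trivial) to reduce to $\psi\in H^2\cap H^1_0(\Omega)$, then regularise: solve an auxiliary elliptic problem, or more simply use that $C^\infty_c(\Omega)$ is dense in $H^2\cap H^1_0$ for a smooth bounded domain, or mollify after extending by a bounded extension operator and re-imposing the boundary condition by a harmonic correction whose $H^2$-norm is controlled by the boundary data of the error. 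Any of these standard devices works; the key structural input is simply that $\Omega$ has smooth boundary and finitely many boundary components.

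The main obstacle — and the reason the lemma is nontrivial enough to be isolated — is the bookkeeping of the \emph{topology} of $\Omega$: one must check that a general $u\in H^1\cap L^2_\sigma$ really is a full gradient-perp of a single-valued $H^2$ stream function, which uses $H^1(\Omega)$ simple-connectedness is \emph{not} assumed, so $\curl u$ alone does not determine $u$ and one must genuinely invoke $\div u=0$, $u\cdot n=0$ to get exactness of the $1$-form $u_1\,dx_2-u_2\,dx_1$ on $\Omega$; and the constants of $\psi$ on the various $\Gamma_i$ are exactly the periods/fluxes, which vanish here precisely because $\{\Gamma_i\}$ exhausts $\partial\Omega$ and $\int_{\partial\Omega}u\cdot n=\int_\Omega\div u=0$ forces, componentwise, $\int_{\Gamma_i}u\cdot n=0$ once one notes each $\Gamma_i$ bounds (together with the others) and uses connectedness of $\Omega$. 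The remaining approximation and mollification steps are routine and I would not dwell on them; the regularity gain $H^2\to C^\infty(\overline\Omega)$ for the stream function, with the two boundary constraints preserved, is the substance, and it follows from the density of $C^\infty(\overline\Omega)\cap\{\text{loc.\ const.\ on }\partial\Omega\}$ in $H^2(\Omega)\cap\{\text{loc.\ const.\ on }\partial\Omega\}$ on a smooth domain.
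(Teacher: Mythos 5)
Your strategy is the same as the paper's: write $u=\nabla^\perp\psi$ with $\psi\in H^2(\Omega)$ locally constant on $\partial\Omega$, approximate $\psi$ in $H^2(\Omega)$ by smooth functions with the same boundary values, and take $\nabla^\perp$ of the approximations. The paper implements the middle step by solving $-\Delta\psi_\eps=-(\chi_{3\eps}\curl u)*\eta_\eps$ with Dirichlet data $0$ on $\Gamma_0$ and $c_i$ on $\Gamma_i$ and invoking elliptic regularity, which is precisely your ``auxiliary elliptic problem'' option. The skeleton is therefore fine, but two of your supporting claims are wrong and one of them would derail one of your proposed completions.

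First, the boundary constants $c_i$ do \emph{not} vanish in general. What vanishes are the periods $\int_{\Gamma_i}u\cdot n$ (trivially, since $u\cdot n=0$ pointwise on $\partial\Omega$), and that is what makes the stream function single-valued. The differences $c_i-c_0$ are instead the fluxes of $u$ across cross-cuts joining $\Gamma_0$ to $\Gamma_i$, and these are unconstrained: for the annulus $1<|x|<2$ and $u=x^\perp/|x|^2=\nabla^\perp\log|x|$ one has $c_1-c_0=\log 2$. This is harmless only because you also mention subtracting a smooth function interpolating the $c_i$; that step is not ``trivial'' and must actually be carried out (or one keeps the $c_i$ as Dirichlet data in the elliptic problem, as the paper does). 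Second, $C^\infty_c(\Omega)$ is \emph{not} dense in $H^2(\Omega)\cap H^1_0(\Omega)$ for the $H^2$ norm: its closure there is $H^2_0(\Omega)$, so the normal derivative is also forced to vanish in the limit, and a $\psi\in H^2\cap H^1_0$ with $\partial_n\psi\neq0$ on $\partial\Omega$ cannot be reached this way. You must therefore use one of your other two devices (the elliptic one, or extension--mollification followed by a harmonic correction), both of which do work. With these two points repaired, the proof goes through and coincides in substance with the paper's.
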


\begin{proof}
Let $ v \in H^1(\Omega) \cap L^2_{\sigma}(\Omega) $, then by \cite[Corollary 3.3]{GIR:RAV} there exists a stream function $ \psi $ such that $ \nabla^{\perp} \psi = v $ and $ \psi \in H^{2}(\Omega) $. Using the condition $ v \cdot n  = 0 $ on $ \partial\Omega $, $ \psi $ satisfies w.l.o.g. 

\begin{equation*}
\begin{cases}
-\Delta \psi = - \curl v \quad & \text{ in } \Omega, \\
\psi = 0 \quad & \text{ on } \Gamma_0, \\
\psi = c_i \quad & \text{ on } \Gamma_i,
\end{cases}
\end{equation*}
for some constant $ c_i$. Consider $ \eta_{\varepsilon} $ a symmetric convolution kernel of mass $ 1 $ with support in $ B_{\varepsilon}(0)$ and consider $ \chi_{\varepsilon} $ the characteristic function such that $ \chi_{\varepsilon}(x) = 1 $ if $ \dist(x,\partial\Omega) > \varepsilon$ and $ 0 $ else. We define 

\begin{equation*}
\begin{cases}
-\Delta \psi_{\varepsilon} = - \left(\chi_{3\varepsilon}\curl v\right)*\eta_{\varepsilon} \quad & \text{ in } \Omega, \\
\psi_{\varepsilon} = 0 \quad & \text{ on } \Gamma_0, \\
\psi_{\varepsilon} = c_i \quad & \text{ on } \Gamma_i.
\end{cases}
\end{equation*}

The functions $ v_{\varepsilon} = \nabla^{\perp} \psi_{\varepsilon} $ are the desired approximations of $ v $. First of all we prove that $ v_{\varepsilon} \in C^{\infty}_{c}(\Omega) \cap L^{2}_{\sigma}(\Omega) $. This is clear by elliptic regularity and $ v_{\varepsilon} \cdot n = \nabla^{\perp} \psi_{\varepsilon}\cdot n = \nabla \psi \cdot \tau = 0 $ on $ \partial \Omega $ ($\psi_{\varepsilon}$ is constant in any $ \Gamma_i$). To prove the convergence we notice, by elliptic regularity

\begin{equation*}
\|v_{\varepsilon} - v\|_{H^1(\Omega)} \leq \|\psi_{\varepsilon} - \psi\|_{H^2(\Omega)} \leq C \|\left(\chi_{3\varepsilon}\curl v\right)*\eta_{\varepsilon} - \curl v \|_{L^2(\Omega)} \to 0.  
\end{equation*}

\end{proof}

\begin{Lemma}
\label{lem2}
The set 
\begin{equation}
\label{Y}
\mathcal{Y} = \big\{ v \in L^{2}_{\sigma}(\mathbb{R}^2) \big| \text{ there exist } v_F \in C^{\infty}_{\sigma, c}(\mathbb{R}^2) \text{ and } v_R\in\mathcal{R} \text{ such that } v|_{\mathcal{F}} =  v_F|_{\mathcal{F}} \text{ and }  v|_{\mathcal{S}_0} =  v_S|_{\mathcal{S}_0} \big\},
\end{equation}
is dense in $\mathcal{V}$, $\underline{\mathcal{V}}$ and $\mathcal{H} $.
\end{Lemma}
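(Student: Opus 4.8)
\medskip
\noindent\textit{Sketch of a proof.}
Denote by $\mathcal{R}$ the space of rigid velocity fields $x\mapsto\ell+rx^{\perp}$. The plan is to reduce the statement to a truncation-and-regularisation problem on the fixed exterior domain $\mathcal{F}_0$, keeping the rigid component of the field untouched throughout. First one checks that $\mathcal{Y}\subset\widehat{\mathcal{V}}\subset\mathcal{V}\subset\underline{\mathcal{V}}\subset\mathcal{H}$ and, more to the point, that a field $v\in L^{2}_{\sigma}(\R^{2})$ belongs to $\mathcal{Y}$ as soon as $v|_{\mathcal{S}_0}\in\mathcal{R}$ and $v|_{\mathcal{F}_0}$ is a compactly supported, divergence-free field of $C^{\infty}(\overline{\mathcal{F}_0})$: indeed one then extends $v|_{\mathcal{F}_0}$ across $\partial\mathcal{S}_0$ into a field of $C^{\infty}_{\sigma,c}(\R^{2})$ by a standard divergence-free extension inside $\mathcal{S}_0$ (possible since $\mathcal{S}_0$ is simply connected and $\int_{\partial\mathcal{S}_0}v\cdot n=0$), while the normal traces of $v$ match across $\partial\mathcal{S}_0$ because $v\in L^{2}_{\sigma}(\R^{2})$. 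Hence, given $\phi$ in $\mathcal{V}$ (resp. in $\underline{\mathcal{V}}$, $\mathcal{H}$), it suffices to construct divergence-free fields $v^{\mathcal{F}}_{\eps}\in C^{\infty}(\overline{\mathcal{F}_0})$, compactly supported, with $v^{\mathcal{F}}_{\eps}\cdot n=(\ell_{\phi}+r_{\phi}x^{\perp})\cdot n$ on $\partial\mathcal{S}_0$ and $v^{\mathcal{F}}_{\eps}\to\phi|_{\mathcal{F}_0}$ as $\eps\to0$ in the $\mathcal{V}$-norm (resp. $\underline{\mathcal{V}}$-, $\mathcal{H}$-norm) restricted to $\mathcal{F}_0$; the approximant of $\phi$ is then $v_{\eps}:=v^{\mathcal{F}}_{\eps}$ on $\mathcal{F}_0$, $v_{\eps}:=\ell_{\phi}+r_{\phi}x^{\perp}$ on $\mathcal{S}_0$, which lies in $\mathcal{Y}$ and converges to $\phi$ since the rigid part is never modified. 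This reduction is exactly what makes $\mathcal{Y}$ (rather than $\mathcal{T}$) work: the fluid part and the solid part are coupled only through the scalar normal-trace condition, so the tangential trace of the fluid part on $\partial\mathcal{S}_0$ stays free -- which is precisely the defect of $\mathcal{T}$ illustrated by the counterexample $f$ above.

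For the truncation at infinity, I would fix $\zeta_{R}\in C^{\infty}_{c}(B_{2R})$ with $\zeta_{R}\equiv1$ on $B_{R}$, $|\nabla\zeta_{R}|\le C/R$, and $R$ large enough that $\mathcal{S}_0\subset B_{R/2}$. As $\phi$ is divergence free, $f_{R}:=\nabla\zeta_{R}\cdot\phi$ has zero average over the annulus $A_{R}:=B_{2R}\setminus\overline{B_{R}}$, so one solves $\div w_{R}=f_{R}$ in $A_{R}$ with the Bogovskii operator and extends $w_{R}$ by $0$; then $\phi^{R}:=\zeta_{R}\phi-w_{R}$ is divergence free on $\mathcal{F}_0$, supported in $\overline{B_{2R}}$, and coincides with $\phi$ near $\partial\mathcal{S}_0$, hence carries the correct normal trace there. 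The scaling of the Bogovskii operator on $A_{R}$ gives $\|w_{R}\|_{L^{2}}+R\,\|\nabla w_{R}\|_{L^{2}}\le C\|\phi\|_{L^{2}(A_{R})}$; together with the $1/R$ decay of $\nabla\zeta_{R}$ and with $1+|y|^{2}\lesssim R^{2}$ on $A_{R}$, each error term involving $w_{R}$ or $\nabla\zeta_{R}$ is bounded by $C\|\phi\|_{L^{2}(A_{R})}\to0$, while the leading term $\zeta_{R}\nabla\phi\to\nabla\phi$ by dominated convergence (using $\nabla\phi\in L^{2}(\mathcal{F}_0)$, resp. $\nabla\phi\in L^{2}(\mathcal{F}_0,(1+|y|^{2})\,dy)$). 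Thus $\phi^{R}\to\phi$ in the relevant norm as $R\to\infty$; no logarithmic cutoff is needed here, since the three spaces all embed into $L^{2}(\R^{2})$.

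With $R$ fixed, it remains to regularise $\phi^{R}$ up to $\partial\mathcal{S}_0$ while preserving the normal trace. Since $\int_{\partial\mathcal{S}_0}\phi^{R}\cdot n=0$ and $\mathcal{F}_0$ has the homotopy type of an annulus, there is a single-valued stream function $\Xi$ on $\mathcal{F}_0$, $\nabla^{\perp}\Xi=\phi^{R}$, normalised to vanish near infinity, hence compactly supported and lying in $H^{1}(\mathcal{F}_0)$ -- in $H^{2}(\mathcal{F}_0)$ when $\phi\in\underline{\mathcal{V}}$ or $\mathcal{V}$. Let $q_{0}$ be a primitive along $\partial\mathcal{S}_0$ of the tangential-derivative relation that links $\Xi$ and $(\ell_{\phi}+r_{\phi}x^{\perp})\cdot n$ (a smooth, single-valued function, because the integral of $(\ell_{\phi}+r_{\phi}x^{\perp})\cdot n$ over $\partial\mathcal{S}_0$ vanishes); up to the additive constant of $\Xi$ we may take $\Xi|_{\partial\mathcal{S}_0}=q_{0}$. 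Fix once and for all $Q\in C^{\infty}_{c}(\overline{\mathcal{F}_0})$ with $Q|_{\partial\mathcal{S}_0}=q_{0}$ and set $\Phi:=\Xi-Q$, so $\Phi$ is compactly supported, vanishes on $\partial\mathcal{S}_0$, and lies in $H^{1}$ (resp. $H^{2}$). Then I would produce $\Phi_{\eps}\in C^{\infty}(\overline{\mathcal{F}_0})$, compactly supported, with $\Phi_{\eps}|_{\partial\mathcal{S}_0}=0$ and $\Phi_{\eps}\to\Phi$ in $H^{1}$ (resp. $H^{2}$): in a tubular neighbourhood of $\partial\mathcal{S}_0$ work in boundary-normal coordinates, extend $\Phi$ by odd reflection across $\partial\mathcal{S}_0$ (the odd reflection of an $H^{k}_{0}$ function is $H^{k}$, $k=1,2$) and convolve with a mollifier even in the normal variable, which keeps the trace equal to $0$; away from $\partial\mathcal{S}_0$ use an ordinary mollification and glue with a partition of unity. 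Finally $v^{\mathcal{F}}_{\eps}:=\nabla^{\perp}(Q+\Phi_{\eps})$ is divergence free, smooth up to $\partial\mathcal{S}_0$, compactly supported, has normal trace on $\partial\mathcal{S}_0$ equal to $\partial_{\tau}(Q+\Phi_{\eps})$ up to the fixed orientation sign, that is to $\partial_{\tau}q_{0}$, i.e. to $(\ell_{\phi}+r_{\phi}x^{\perp})\cdot n$, and $v^{\mathcal{F}}_{\eps}=\phi^{R}+\nabla^{\perp}(\Phi_{\eps}-\Phi)\to\phi^{R}$ in $H^{1}(\mathcal{F}_0)$; since all these objects live in one fixed compact set, this $H^{1}$-convergence coincides with convergence in the $\mathcal{H}$-, $\underline{\mathcal{V}}$- and $\mathcal{V}$-norms restricted to $\mathcal{F}_0$. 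A diagonal argument combining the two steps then establishes the three density statements simultaneously.

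The two steps that require care are the truncation estimate -- where divergence-freeness is preserved through the Bogovskii correction on the annulus rather than by a bare stream-function cutoff -- and the regularisation near $\partial\mathcal{S}_0$, where the normal trace must be kept exactly, which is what forces the introduction of the fixed corrector $Q$ and of the reflection-symmetric mollifier. I expect the genuine obstacle to be the bookkeeping of the normal-trace condition through these two operations; the conceptual heart of the lemma, by contrast, is the simple observation of the first paragraph, namely that allowing the fluid field an unconstrained tangential trace on $\partial\mathcal{S}_0$ (equivalently, matching only the normal trace with the fixed rigid motion) is exactly what restores density, in contrast with the over-rigid set $\mathcal{T}$.
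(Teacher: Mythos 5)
Your argument is correct and proves the same three density statements, but it follows a genuinely different technical route from the paper's, in both of the two main steps. The paper first subtracts a globally smooth, compactly supported, divergence-free field $u=\nabla^{\perp}\bigl(\chi_{R/4}(-\ell^{\perp}\cdot x+\tfrac{r}{2}|x|^{2})\bigr)$ that coincides with the rigid velocity near $\mathcal{S}_0$, so that the remainder vanishes on $\mathcal{S}_0$ and is tangent to $\partial\mathcal{S}_0$; it then splits the remainder by a stream-function cutoff, treats the far-field piece by the classical identification $H^{1}_{0}\cap L^{2}_{\sigma}=\overline{C^{\infty}_{\sigma,c}}^{\,H^1}$ on the exterior of a ball, and regularises the bounded piece via Lemma~\ref{den:BOOM}, i.e.\ by mollifying the vorticity away from the boundary and re-solving the Dirichlet problem for the stream function. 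You instead keep the rigid part untouched and approximate only the fluid part subject to the exact inhomogeneous normal-trace condition (an equivalent reduction: prescribing $v^{\mathcal{F}}_{\eps}\cdot n=(\ell_{\phi}+r_{\phi}x^{\perp})\cdot n$ is the same as prescribing zero normal trace for the approximants of $\phi-u$); you replace the exterior-domain density theorem by a Bogovskii correction on the annulus $A_R$ (your scaling $\|w_{R}\|_{L^{2}}+R\|\nabla w_{R}\|_{L^{2}}\le C\|\phi\|_{L^{2}(A_{R})}$ is the right one and does absorb the weight $(1+|y|^{2})^{1/2}$ of $\mathcal{V}$); and you replace Lemma~\ref{den:BOOM} by lifting the boundary trace of the stream function with a fixed smooth $Q$ and mollifying the remainder by odd reflection. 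Your version is self-contained (it needs neither Lemma~\ref{den:BOOM} nor the exterior-domain theorem) at the price of the reflection bookkeeping in boundary-normal coordinates; the paper's version outsources exactly those two points to known results.

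Two small precisions, neither of which is a gap. In the reflection step you write that ``the odd reflection of an $H^{k}_{0}$ function is $H^{k}$, $k=1,2$''; what you actually have, and all you need, is $\Phi\in H^{2}\cap H^{1}_{0}$ (its normal derivative on $\partial\mathcal{S}_0$ has no reason to vanish), and the correct statement is that the odd reflection of an $H^{2}\cap H^{1}_{0}$ function is $H^{2}$ across the boundary, because the even reflection of $\partial_{n}\Phi$ and the odd reflection of $\partial_{\tau}\Phi$ (whose boundary trace vanishes, being the tangential derivative of $0$) are both jump-free. And in the case of $\mathcal{H}$, where $\Xi$ is only $H^{1}$, the identification of its trace with $q_{0}$ rests on the distributional identity $\partial_{\tau}(\Xi|_{\partial\mathcal{S}_0})=\phi\cdot n|_{\partial\mathcal{S}_0}=(\ell_{\phi}+r_{\phi}x^{\perp})\cdot n$, which is legitimate because the normal trace of an $L^{2}$ divergence-free field is well defined in $H^{-1/2}(\partial\mathcal{S}_0)$ and, by the global divergence-free condition, coincides with the smooth trace coming from the rigid field inside the solid; it would be worth saying this explicitly.
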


\begin{proof}
The proof in the case of $\mathcal{H} $ is easy. We turn to the case of $\mathcal{V}$ and $\underline{\mathcal{V}}$. The difference between the two spaces is the integrability at $ +\infty $ but this will not change much the proof so we will do it only for $\underline{\mathcal{V}}$. 

Let $ v \in \underline{\mathcal{V}} $ and let $ l $ and $ r $ such that $ v_R = l+x^{\perp}r $. For $ \rho > 0 $ such that $ \rho > \text{diam}(\mathcal{S}_0)$, we define $ \chi_{\rho} $ to be a smooth cut off function such that $0 \leq \chi_{\rho} \leq 1 $, $ \chi_{\rho} = 1 $ in $ B_{\rho}(0) $, $ \chi_{\rho} = 0 $ outside $ B_{2\rho}(0) $ and $|\nabla \chi_{\rho}| \leq C/{\rho} $. Fix $ R > 0 $ such that $ R/4 > \text{diam}(\mathcal{S}_0)$, we decompose $ v = u + v_1 $, where $ u = \nabla^{\perp}(\chi_{R/4}(-l^{\perp}\cdot x + r/2|x|^2))$. The function $ u \in C^{\infty}_{\sigma,c}(\mathbb{R}^2) $ and $ v_1|_{\mathcal{F}} \in H^{1}(\mathbb{R}^2)\cap L^{2}_{\sigma}(\mathcal{F}) $ and $v_1|_{\mathcal{S}_0} = 0 $. By Theorem 3.3 of \cite{GIR:RAV} there exists $ \varphi \in H^2(B_{2R}(0)\setminus \mathcal{S}_0 )$ such that $ v_1 = \nabla^{\perp} \varphi $. We decompose $ v_1 = w + z $ where $ w = \nabla^{\perp}(\chi_R \varphi) $. The function $ z $ is such that $ z|_{B_R(0)} = 0 $ and  $ z|_{\mathbb{R}^2\setminus B_R(0)} \in H^{1}_0(\mathbb{R}^2\setminus \overline{B_R(0)})\cap L^{2}_{\sigma}(\mathbb{R}^2\setminus B_R(0)) = E$, where
$$ E= \overline{\{C^{\infty}_{\sigma,c}(\mathbb{R}^2\setminus \overline{B_R(0)}) \}}^{\|.\|_{H^1}},  $$    
see for example \cite[Section III.4.2]{Gal}. This provides the existence of a sequence $ \tilde{z}_{\varepsilon} \in C^{\infty}_{\sigma,c}(\mathbb{R}^2\setminus \overline{B_R(0)}) $ such that $ \tilde{z}_{\varepsilon} \to z|_{\mathbb{R}^2\setminus B_R(0)}  $ in $ H^1({\mathbb{R}^2\setminus B_R(0)} )$. Let $ z_{\varepsilon} $ to be the extension  by $ 0 $ of $ \tilde{z}_{\varepsilon} $ inside $B_R(0)$, then $ z_{\varepsilon} \to z $ in $ \underline{ \mathcal{V} }$. We now study $ w $. The function $ w \in H^1(B_{4R}(0)\setminus \overline{\mathcal{S}_0})\cap L^2_{\sigma}(B_{4R}(0)\setminus \overline{\mathcal{S}_0})$. By Lemma \ref{den:BOOM} there exist $ \tilde{w}_{\varepsilon} \in C^{\infty}(B_{4R}(0)\setminus \mathcal{S}_0) \cap L^{2}_{\sigma}((B_{4R}(0)\setminus \overline{\mathcal{S}_0}) $ such that $ \tilde{w}_{\varepsilon} \to w|_{B_{4R}(0)\setminus \overline{\mathcal{S}_0}} $ in $H^1(B_{4R}(0)\setminus \overline{\mathcal{S}_0}) $. Let $ \psi_{\varepsilon}\in H^2(B_{4R}(0)\setminus \overline{\mathcal{S}_0}) $ such that $ \tilde{w}_{\varepsilon} = \nabla^{\perp} \psi_{\varepsilon}$. The function $ \psi_{\varepsilon}$ is unique up to a constant, so we choose the unique $ \psi_{\varepsilon} $ such that $ \int_{B_{4R}(0)\setminus \overline{B_{2R}(0)}} \psi_{\varepsilon} = 0 $. Define $ \bar{w}_{\varepsilon} = \nabla^{\perp}(\chi_{2R} \psi_{\varepsilon})$ and denote by $ \bar{w} = w|_{B_{4R}(0)\setminus \overline{\mathcal{S}_0}}$. We have
\begin{align*}
\|\bar{w}-\bar{w}_{\varepsilon}\|_{H^1(B_{4R}(0)\setminus \overline{\mathcal{S}_0})} \leq & \|\bar{w}-\tilde{w}_{\varepsilon}\|_{H^1(B_{4R}(0)\setminus \overline{\mathcal{S}_0})}+C\|\tilde{w}_{\varepsilon}\|_{H^1(B_{4R}(0)\setminus B_{2R}(0))}+\|(\nabla^{\perp}\chi_{2R})\psi_{\varepsilon}\|_{H^1(B_{4R}(0)\setminus \overline{\mathcal{S}_0})} \\
\leq & o(\varepsilon)+ C\|\tilde{w}_{\varepsilon}\|_{H^1(B_{4R}(0)\setminus B_{2R}(0))} + C\|\psi_{\varepsilon}\|_{L^2(B_{4R}(0)\setminus B_{2R}(0)))} \\
\leq & o(\varepsilon)+C\|\tilde{w}_{\varepsilon}\|_{H^1(B_{4R}(0)\setminus B_{2R}(0))} = o(\varepsilon), 
\end{align*}
in fact we can use the Poincar\'e inequality on the $ \psi_{\varepsilon} $ and $ \|\tilde{w}_{\varepsilon}\|_{H^1(B_{4R}(0)\setminus B_{2R}(0))} = o(\varepsilon)$ because $ w = 0 $ in $ B_{4R}(0)\setminus B_{2R}(0)$ ($C$ is a constant that change from line to line). Let $ w_{\varepsilon} $ be the extension by $ 0 $ of $ \bar{w}_{\varepsilon} $. The functions
$$ v_{\varepsilon} = u + w_{\varepsilon}+z_{\varepsilon} \to v \quad \text{ in } \underline{ \mathcal{V }}, $$
Moreover $ v_{\varepsilon}$, $u$, $ w_{\varepsilon}$ and $z_{\varepsilon}$ are element of $ \mathcal{Y}$ (to extend $ w_{F,\varepsilon} $ in the interior is enough to extend $ \psi_{\varepsilon}$).
\end{proof}
Finally we move to the proof of Theorem \ref{2dNSBodyWeak}.

\begin{proof}
The proof of this theorem follows the proof of Theorem 1 in \cite{PS}. The main difficulty is to deal with the fact that the function $ H $ is not an $ L^2{(\mathcal{F}_0)}$ function. In this work we emphasize only the changes in the proof in \cite{PS}, for this reason we divide the proof in several steps as in the paper mention above.   

The idea of the proof is to use an energy estimate to prove that the Galerkin approximation converges. To get  the energy estimate at a formal level is enough to test the equation with $ \tilde{u} $, but this does not work because $ b $ is not bounded in $ \underline{\mathcal{V}}\times\underline{\mathcal{V}}\times\underline{\mathcal{V}} $ but only in $ \underline{\mathcal{V}}\times \underline{\mathcal{V}} \times \mathcal{V}$. The idea is to use a truncation of the solid velocity far from the solid. This procedure was introduced by \cite{orr:tak} in a slightly different setting.

For simplicity in the proof we consider the case $ \beta = 1 $. Dealing with $ \beta \neq 1 $ is not an issue.

\bigskip 

{\bf Truncation.} As said in the beginning we refer to \cite{PS} for more details. Let $ R_0 $ such that $ \mathcal{S}_0 \subset B(0, R_0/2) $. For $ R > R_0 $, let $ \chi_{R}: \mathbb{R}^2 \to \mathbb{R}^2 $ the map such that 
\begin{equation*}
\chi_{R}(x) = \begin{cases}
\chi_{R}(x) = x^{\perp} \quad & \text{ for } x \text{ in } B(0,R/2); \\
\chi_{R}(x) = \frac{R}{|x|}x^{\perp} \quad & \text{ for } x \text{ in } \mathbb{R}^2\setminus B(0,R/2).
\end{cases}
\end{equation*} 
Note that for $ w \in \mathcal{V} $ we have that
\begin{equation*}
\chi_{R}\cdot \nabla w \to x^{\perp}\cdot \nabla w \quad \text{ in } L^2(\mathbb{R}^2) \text{ as } R \to +\infty.
\end{equation*} 

We can use the functions $ \chi_{R} $ to truncate the solid velocity in the following way: we define 
\begin{equation*}
u_{\mathcal{S},R}(t,x) = l(t)+r(t)\chi_{R}(x),
\end{equation*}
and the forms
\begin{equation*}
b_{R}(u,v,w) = m r_u l_u^{\perp}\cdot l_v^{\perp}+\mathcal{J}_0r_ur_vr_w+\int_{\mathcal{F}_0}\left[((u-u_{\mathcal{S},R})\cdot\nabla)w\right]\cdot v - r_uv^{\perp}\cdot w dx. 
\end{equation*}
The advantage of $ b_R $ is that it is a continuous form from $ \underline{\mathcal{V}}\times \underline{\mathcal{V}}\times \underline{\mathcal{V}} $ to $ \mathbb{R}$. Moreover there exists a constant $ C $ independent from $ R $ such that for any $ (u,v,w) \in \underline{\mathcal{V}}\times \underline{\mathcal{V}}\times \mathcal{V} $,
$
|b_{R}(u,v,w)| \leq C \|u\|_{\underline{\mathcal{V}}}\|v\|_{\underline{\mathcal{V}}}\|w\|_{\mathcal{V}}
$
and for any $(u,v) \in \underline{\mathcal{V}}\times \mathcal{V} $, 
$
|b_{R}(u,u,v)|\leq C(\|u\|_{L^4(\mathcal{F}_0)}^2+\|u\|_{\mathcal{H}}^2)\|v\|_{\mathcal{V}}
$.
The cancellation property still hold, in fact for any $(u,v) \in \underline{\mathcal{V}}\times \underline{\mathcal{V}} $,
$
b_{R}(u,v,v) = 0
$.  
Finally we note that for any $ (u,v,w) \in \underline{\mathcal{V}}\times \underline{\mathcal{V}}\times \mathcal{V} $
$b_{R}(u,v,w) \to b(u,v,w) $ when $ R $ goes $ +\infty$.

{\bf Existence for the truncated system.} In this step we present the existence of a solution for the truncated system. We claim that for any $ \tilde{u}_0 \in \mathcal{H} $ and $ T > 0 $, there exists $ \tilde{u}_{R} \in C([0,T];\mathcal{H})\cap L^2([0,T];\underline{\mathcal{V}}) $ such that for all $ \varphi\in C^{\infty}([0,T];\mathcal{H}) $ and $ \varphi|_{\overline{\mathcal{F}_0}} \in C^1([0,T];C^{\infty}_c(\overline{\mathcal{F}_0})) $, and for all $ t \in [0,T] $, it holds
\begin{align*}
(\tilde{u}_R(t),\varphi(t))_{\mathcal{H}}-(\tilde{u}_{R,0},\varphi(0))_{\mathcal{H}}= \int_0^t \Big[&(\tilde{u}_R,\partial_t \varphi)_{\mathcal{H}}+2\nu a(\tilde{u}_R,\varphi)+2\nu a(H,\varphi) \\ & -b_R(\tilde{u}_R,\varphi,\tilde{u}_R)-b(H,\varphi,\tilde{u}_R)-b(\tilde{u}_R,\varphi, H) \Big]dt.
\end{align*}
Moreover $ \tilde{u}_{R} $ satisfies for almost every $ t \in [0,T] $ the energy inequality
\begin{equation*}
\frac{1}{2}\|\tilde{u}_{R}(t)\|_{\mathcal{H}}^2+\int_0^{T}\int_{\partial\mathcal{S}_0}|\tilde{u}_R-\tilde{u}_{R,\mathcal{S}}|^2 dsdt + \int_0^t\int_{\mathcal{F}_0}|D(\tilde{u}_R)|^2dxdt 
\leq C\int_0^T\left(\|\tilde{u}_R\|_{\mathcal{H}}^2+1\right)dt.
\end{equation*}
The idea of the proof is based on the Galerkin method. We consider the set
\begin{equation*}
\mathcal{Y} = \left\{ v \in L^{2}_{\sigma}(\mathbb{R}^2) \big| \text{ there exist } v_F \in C^{\infty}_{\sigma, c}(\mathbb{R}^2) \text{ and } v_R\in\mathcal{R} \text{ such that } v|_{\mathcal{F}} =  v_F|_{\mathcal{F}} \text{ and }  v|_{\mathcal{S}_0} =  v_S|_{\mathcal{S}_0} \right\},
\end{equation*}
which is dense in $ \underline{\mathcal{V}} $. Therefore there exists a base $ \{w_i\}_{i\in\mathbb{N}}$ of the Hilbert space $ \underline{\mathcal{V}} $ such that $ w_i \in \mathcal{Y}$ for all $i$. We consider the approximate solution 
\begin{equation*}
\tilde{u}_N(t,x) = \tilde{u}_{N,R}(t,x) = \sum_{i=1}^N g_{i,N}(t)w_i(x),
\end{equation*}
where we forgot $ R $ for simplicity. The function $ \tilde{u}_N $ satisfies
\begin{equation}\label{gal:equ}
(\partial_t \tilde{u}_N,w_j)_{\mathcal{H}} =  2\nu a(\tilde{u}_N,w_j)+2\nu a(H,w_j)+ b_R(\tilde{u}_N,\tilde{u}_N,w_j) -b(H,w_j,\tilde{u}_N)-b(\tilde{u}_N,w_j,H), \quad 
\tilde{u}_N|_{t=0} = \tilde{u}_{N0}. 
\end{equation}
where $ \tilde{u}_{N0}$ is the orthogonal projection in $ \mathcal{H} $ of $\tilde{u}_0 $ onto the space spanned by $ w_1,\dots, w_N $. The existence of such $ g_{i,N} $ is due to the Cauchy-Lipschitz theorem applied to the system of ODE:
\begin{equation*}
\mathcal{G}_N' = \mathcal{M}^{-1}_N\left[2\nu\mathcal{A}_N\mathcal{G}_N+2\nu\mathcal{A}_{N,H}-\mathcal{B}_{N,H_1}(\mathcal{G}_N)-\mathcal{B}_{N,H_3}(\mathcal{G}_N)+\mathcal{B}_N(\mathcal{G}_N,\mathcal{G}_N) \right], \quad 
\mathcal{G}_N(0) = \mathcal{G}_{N,0},
\end{equation*} 
where 
\begin{gather*}
\mathcal{M}_N=[(w_i,w_j)_{\mathcal{H}}]_{1\leq i,j \leq N}, \quad \mathcal{G}_N = [g_{1,N} \dots g_{N,N}]^{T}, \quad \mathcal{A}_N = [a(w_i,w_j)]_{1\leq i, j \leq N}, \\
\left[\mathcal{B}_{N,H_1}(u)\right]_j=\sum_{k=1}^Nu_kb(H,w_j,w_k),  \quad \left[\mathcal{B}_{N,H_3}(u)\right]_j=\sum_{i=1}^N u_ib(w_i,w_j,H), \\
\left[ \mathcal{B}_N(u,v) \right]_j= \sum_{i,k = 1}^N u_i v_k b_R(w_i,w_j,w_k), \quad \left[\mathcal{A}_{N,H}\right]_j = a(H,w_j). 
\end{gather*} 
Note that $ \mathcal{M}_N $ is invertible because $ \{w_i\}_{i\in\mathbb{N}} $ are linear independent in $ \mathcal{H} $.

The Cauchy-Lipschitz theorem ensures a local in time existence for the functions $ g_{i,N}$. To prove that the existence is in all the interval $ [0,T] $ we need an estimate that leads us to conclude that the function $ g_{i,N} $ are defined in all $[0,T]$. To do that we multiply (\ref{gal:equ}) by $ g_{j,N}$ and we sum over $ j $ to obtain
\begin{equation}
\label{ene:equ}
\frac{1}{2}\frac{d}{dt}\|\tilde{u}_N\|_{\mathcal{H}}+2\nu\int_{\mathcal{F}_0}|D(\tilde{u}_N)|^2 dx+2\nu \alpha \int_{\partial \mathcal{S}_0}|\tilde{u}_N- \tilde{u}_{N,\mathcal{S}}|^2ds = 2\nu a(H,\tilde{u}_N)-b(\tilde{u}_N,\tilde{u}_N,H).
\end{equation}  
We now estimate the right hand side of the last equality. Note that for any $\varepsilon $ there exists $ C_{\varepsilon} $ such that
\begin{equation*}
|a(H,\tilde{u}_N)| \leq C_{\varepsilon}+\varepsilon\left(\int_{\mathcal{F}_0}|D(\tilde{u}_N)|^2 dx+ \int_{\partial \mathcal{S}_0}|\tilde{u}_N- \tilde{u}_{N,\mathcal{S}}|^2ds \right) \quad \text{ and that } \quad |b(\tilde{u}_N,\tilde{u}_N,H)| \leq C\|\tilde{u}_N\|_{\mathcal{H}}^2,
\end{equation*}
where $ C $ and $  C_{\varepsilon} $ do not depend on $ N$ and $ R $. If we integrate (\ref{ene:equ}) in $ (0,t) $, we use the two inequality above and we bring on the left the terms multiplied by $ \varepsilon $  we get
\begin{equation*}
\|\tilde{u}_N\|_{\mathcal{H}}^2+\int_0^t\int_{\mathcal{F}_0}|D(\tilde{u}_N)|^2 dx+ \int_0^t\int_{\partial \mathcal{S}_0}|\tilde{u}_N- \tilde{u}_{N,\mathcal{S}}|^2ds 
\leq  \int_0^t  \left(C+\|\tilde{u}_N\|_{\mathcal{H}}^2\right)dt +\|\tilde{u}_{N0}\|_{\mathcal{H}}^2.
\end{equation*}
Using the Gr\"onwall lemma we obtain the estimate
\begin{equation*}
\|\tilde{u}_N\|_{\mathcal{H}}^2 \leq te^{tC}\left(C\frac{t}{2}+\|\tilde{u}_{N0}\|_{\mathcal{H}}^2 \right)+Ct+\|\tilde{u}_{N0}\|_{\mathcal{H}}^2,
\end{equation*}
which leads us to conclude that the function $g_{i,N} $ can be extended in all $[0,T]$.

Moreover, by the fact that $ \|\tilde{u}_{N0}\|_{\mathcal{H}} \leq \|\tilde{u}_{0}\|_{\mathcal{H}} $ and by the Korn inequality, we conclude that
\begin{gather*}
\tilde{u}_{N} \in L^{\infty}((0,T);\mathcal{H})\\
\tilde{u}_{N} \in L^2((0,T);\underline{\mathcal{V}})
\end{gather*} 
are uniformly bounded in both the spaces. This leads us to conclude that there exists $ \tilde{u} \in L^{\infty}((0,T);\mathcal{H})\cap L^2((0,T);\underline{\mathcal{V}}) $ such that $\tilde{u}_N $ converges to $ \tilde{u}$ weakly in $ L^2((0,T);\underline{\mathcal{V}}) $ and *-weakly in $ L^{\infty}((0,T);\mathcal{H}) $ as $ N $ goes to $ +\infty$.

We pass to the limit in (\ref{gal:equ}). The only not triviality is to prove the convergence of the non-linear term, i.e. $ b_R(\tilde{u}_N,\tilde{u}_N,w_j)$ converges to $ b_R(\tilde{u},\tilde{u},w_j)$. The idea is to notice that $ \tilde{u}_N$ is relatively compact in $ L^2((0,T);L^2_{loc}(\mathbb{R}^2)) $, in fact this follows from the proof of Theorem 1 in \cite{PS}, where the only difference is the estimate
\begin{equation*}
\|f_N\|_{\underline{\mathcal{V}}'} \leq C(1+\|\tilde{u}_N\|_{\underline{\mathcal{V}}}+\|\tilde{u}_N\|_{\underline{\mathcal{V}}}^2),
\end{equation*}    
with $f_N $ defined by 
\begin{equation*}
\langle f_N, w \rangle = 2\nu a(\tilde{u}_N, w)+2\nu a(H,w)+ b_R(\tilde{u}_N,\tilde{u}_N,w)-b(H,w,\tilde{u}_N)-b(\tilde{u}_N,w,H).
\end{equation*}

At this point we are able to pass to the limit in 
\begin{align*}
(\tilde{u}_N(t),\varphi(t))_{\mathcal{H}}-(\tilde{u}_{N0},\varphi(0))_{\mathcal{H}}= \int_0^t \Big[&(\tilde{u}_N,\partial_t \varphi)_{\mathcal{H}}+2\nu a(\tilde{u}_N,\varphi)+2\nu a(H,\varphi) \\  -b_R(\tilde{u}_N&,\varphi,\tilde{u}_N)-b(H,\varphi,\tilde{u}_N)-b(\tilde{u}_N,\varphi, H) \Big]dt.
\end{align*}
which means that $ \tilde{u} = \tilde{u}_R $ satisfies
\begin{align*}
(\tilde{u}_R(t),\varphi(t))_{\mathcal{H}}-(\tilde{u}_{R0},\varphi(0))_{\mathcal{H}}= \int_0^t \Big[&(\tilde{u}_R,\partial_t \varphi)_{\mathcal{H}}+2\nu a(\tilde{u}_R,\varphi)+2\nu a(H,\varphi) \\  -b_R(\tilde{u}_R&,\varphi,\tilde{u}_R)-b(H,\varphi,\tilde{u}_R)-b(\tilde{u}_R,\varphi, H) \Big]dt.
\end{align*}

\bigskip

{\bf Limit of the solutions of the truncated problems.} We note that the energy estimate do not depend on $ R $, so there exists sequence $ \tilde{u}_{k,R_k} $ converging to $ \tilde{u} \in C((0,T);\mathcal{H})\cap L^2((0,T);\underline{\mathcal{V}}) $ *-weakly in $ L^{\infty}((0,T);\mathcal{H}) $ and weakly in $ L^2((0,T);\underline{\mathcal{V}}) $  as $ k $ goes to $ +\infty$.

This convergences do not leads us to pass directly to the limit because of the non-linearity of $ b_R $, in other words we have to find an argument to prove that 
\begin{equation*}
\int_{0}^tb_{R_k}(\tilde{u}_{k,R_k},\tilde{u}_{k,R_k},\varphi) \to \int_{0}^tb(\tilde{u},\tilde{u},\varphi)dx, \quad \text{ as } k \text{ goes to } +\infty.
\end{equation*}  
As presented in the paper \cite{PS}, it is enough to prove that $ \tilde{u}_{k,R_k} $ is relatively compact in $ L^2((0,T);L^2_{loc}(\mathbb{R}^2))$. We have already presented this compactness property for $ \tilde{u}_{N,R} $, but the estimates are $ R $ depending so we cannot directly conclude.

The idea is to apply the Aubin-Lions lemma to get the compactness result. First of all we note that $ \tilde{u}_{k,R_k} $ are uniformly bounded in $ L^4(0,T;L^4(\mathcal{F}_0)) $, in fact
\begin{align*}
\|\tilde{u}_{k,R_k}\|_{L^4(0,T;L^4(\mathcal{F}_0))}^4 \leq & \int_0^t \|\tilde{u}_{k,R_k}\|_{L^4(\mathcal{F}_0)}^4 dt \\  \leq & \int_0^t\|\tilde{u}_{k,R_k}\|^2_{L^2(\mathcal{F}_0)}\|\nabla \tilde{u}_{k,R_k}\|^2_{L^2(\mathcal{F}_0)} dt \\ \leq & \|\tilde{u}_{k,R_k}\|_{L^{\infty}(0,T;L^2(\mathcal{F}_0))} \|\nabla \tilde{u}_{k,R_k}\|_{L^2(0,T;L^2(\mathcal{F}_0))}.
\end{align*} 
This leads us to prove that $ \partial_t \tilde{u}_{k,R_k} $ is uniformly bounded in $ L^2((0,T);\mathcal{V}') $, in fact the only non-linear term that can be an issue is 
\begin{equation*}
\int_{\mathcal{F}_0} [(\tilde{u}_{k,R_k}\cdot\nabla)g]\cdot \tilde{u}_{k,R_k} dx, 
\end{equation*}
where $ g \in L^2(0,T; \mathcal{V})$, but it can be bound by
\begin{align*}
\left|\int_0^T \int_{\mathcal{F}_0} [(\tilde{u}_{k,R_k}\cdot\nabla)g]\cdot \tilde{u}_{k,R_k} dx dt \right| \leq & C \int_0^T \|\tilde{u}_{k,R_k}\|_{L^4(\mathcal{F}_0)}^2\|\nabla g\|_{L^2(\mathcal{F}_0)} dt \\
\leq & C\|\tilde{u}_{k,R_k}\|_{L^4(0,T;L^4(\mathcal{F}_0))}^2\|g\|_{L^2(0,T; \mathcal{V})}.
\end{align*}
It is clear from Aubin-Lions lemma that $ \{\tilde{u}_{k,R_k}|_{B_r(0)}\}_{k\in\mathbb{N}}$ is relatively compact in $ L^2(0,T;L^2(B_r(0))) $, for every ball $ B_r (0) $ of radius $ r \in \mathbb{N} $ such that $ \mathcal{S}_0 \subset B_r(0) $. By extracting a diagonal subsequence we get that 
$ \{\tilde{u}_{k,R_k}\}_{k\in\mathbb{N}}$ is relatively compact in $ L^2(0,T;L_{loc}^2(\mathbb{R}^2)) $.   

We can now pass to the limit in the weak formulation to get the desired result.
%

\bigskip

{\bf Improved regularity for $(l,r)$.} In two dimensions the  Kirchhoff potentials  are the solutions $\Phi=(\Phi_{i})_{i=1,2, 3}$ of the following problems:
\begin{equation*} 
-\Delta \Phi_i = 0 \quad   \text{for}  \ x\in \mathcal{F}_{0}   ,
\end{equation*}
\begin{equation*}
\Phi_i \longrightarrow 0 \quad  \text{for}  \ |x| \rightarrow  \infty,
\end{equation*}
\begin{equation*} 
\frac{\partial \Phi_i}{\partial n}=K_i
\quad  \text{for}  \  x\in \partial \mathcal{F}_{0}   ,
\end{equation*}
where
\begin{equation*} 
(K_{1},\, K_{2}, \, K_{3}) =(n_1,\, n_2 ,\, x^\perp \cdot n).
\end{equation*}
These functions are smooth and decay at infinity as follows:
\begin{equation} \nonumber
\nabla \Phi_i = {\mathcal O}\left( \frac{1}{|x|^{2}}\right) \ \text{ and } \nabla^{2}   \Phi_i  = {\mathcal O}\left( \frac{1}{|x|^{3}}\right) \  \text{ as } x \rightarrow \infty .
\end{equation}

We now define three functions $v_i $,  for $i = 1,2,3$, defined by
\begin{equation*}
v_i = \nabla  \Phi_i \text{ in } \mathcal{F}_{0}  \text{ and }v_i =
\left\{\begin{array}{ll}
e_i & \text{if} \ i=1,2 ,\\ \relax
 x^\perp  & \text{if} \ i=3,
\end{array}\right.
  \text{ in } \mathcal{S}_{0} ,
\end{equation*}
and which are  $  \widehat{\mathcal{V}}$.
The body's equations can then be rephrased as follows:
\begin{equation*}
    \mathcal{M}  \begin{bmatrix} \ell \\ r \end{bmatrix} '
   = ( 2\nu a(\tilde{u}, v_i )+2\nu a(H,v_i) + b(\tilde{u}, \tilde{u}, v_i)+b(H,\tilde{u}, v_i)-b(\tilde{u},v_i,H)  )_{i \in \{1,\ldots,3\}}  ,
\end{equation*}
where 
\begin{equation*}
\mathcal{M}=
\begin{bmatrix} m \Id_2 & 0 \\ 0 & \mathcal{J} \end{bmatrix}
+ \begin{bmatrix} \displaystyle\int_{\mathcal{F}_0} \nabla \Phi_a \cdot \nabla \Phi_b \, dx \end{bmatrix}_{a,b \in \{1,2,3\}}  .
\end{equation*}
Since the matrix  $\mathcal{M}$ is symmetric and positive definite, applying Proposition \ref{ext:ab}  yields that $(\ell ,r)$ is in $H^1 (0,T;   \R^2 \times \R )  $.
\end{proof}

\ \par \ 

 {\bf Acknowledgements.} 
The author was supported by the Agence Nationale de la Recherche, 
 Project IFSMACS, grant ANR-15-CE40-0010 and the Conseil R\'egional dÄ'Aquitaine, grant 2015.1047.CP.


\end{document}